\documentclass[12pt]{amsart}
\usepackage{amssymb}
\usepackage{bm}

\usepackage[
width=31pc,
height=48pc,
margin=1in,
footskip=30pt,
]{geometry}
\usepackage{layout}

\usepackage[numbers]{natbib}

\usepackage{graphicx}
\usepackage{epsfig}
\usepackage{subcaption}
\usepackage{amsmath,amsfonts,epstopdf}
\usepackage{tikz-cd}
\usepackage{multirow}
\usepackage{enumitem}
\usepackage{mathtools}

\usepackage{hyperref}
\usepackage[notref,notcite,color]{showkeys}

\newcommand{\alignedintertext}[1]{%
  \noalign{%
    \vskip\belowdisplayshortskip
    \vtop{\hsize=\linewidth#1\par
    \expandafter}%
    \expandafter\prevdepth\the\prevdepth
  }%
}

\usepackage{verbatim}

\graphicspath{{figures/}}

\theoremstyle{plain}
\newtheorem{theorem}{Theorem}[section]

\newtheorem{corollary}[theorem]{Corollary}
\theoremstyle{definition}

\newtheorem{remark}[theorem]{Remark}

\numberwithin{equation}{section}

\newcommand{\R}{\mathbb{R}}
\newcommand{\N}{\mathbb{N}}
\newcommand{\Z}{\mathbb{Z}}

\newcommand{\C}{\mathbb{C}}

\newcommand{\Q}{\mathbb{Q}}
\newcommand{\bbB}{\mathbb{B}}
\newcommand{\bbC}{\mathbb{C}}
\newcommand{\bbH}{\mathbb{H}}

\newcommand{\ol}{\overline}

\newcommand{\cali}{\mathcal{I}}

\newcommand{\calp}{\mathcal{P}}

\renewcommand{\Im}{\mathrm{Im}}
\renewcommand{\Re}{\mathrm{Re}}

\title{Construction of invariant curves for some Piecewise Isometries}

\author[Noah Cockram, Peter Ashwin and Ana Rodrigues]{Noah Cockram$^1$, Peter Ashwin$^1$ and Ana Rodrigues$^{1,2,3}$}

\address{$^1$ Department of Mathematics and Statistics, University of Exeter, Exeter EX4 4QE, UK} 

\address{$^2$ Departamento de Matem\'{a}tica, Escola de Ci\^{e}ncias  e Tecnologia, Universidade de \'{E}vora, Rua Rom\~{a}o Ramalho, 59, 7000--671 \'{E}vora, Portugal }

\address{$^3$ Centro de Investiga\c{c}\~{a}o em Matem\'{a}tica e Aplica\c{c}\~{o}es, Rua Rom\~{a}o  Ramalho, 59, 7000--671 \'{E}vora, Portugal} 

\begin{document}

\begin{abstract}
We establish conditions for the existence of a family of piecewise linear invariant curves in a two-parameter family of piecewise isometries on the upper half-plane known as Translated Cone Exchange Transformations. We show that these curves are embeddings of interval exchange transformations and give rise to layers of invariant regions. %We note that the restriction of the piecewise isometry to the union of these regions is a piecewise isometry that contains non-convex atoms, though we conjecture that the non-convexity does not have a pathological effect on the uniqueness of cell encodings in this case.  
We also show the existence of a trapezoidal piecewise isometry for which the dynamics on the top and bottom edges are distinct 2-interval exchange transformations.
\end{abstract}

\maketitle

\section{Introduction}

An \textit{interval exchange transformation} (IET) is a pair $(\mathcal{I}, f)$ such that $f$ is a bijective map that is a translation on each set in a partition $\mathcal{I}$ of an interval $I$ into intervals. If the partition consists of $d$ then we say it is a $d$-IET. Such a map can be parametrized in terms of $\alpha\in\R_+^d$ and a permutation $\sigma\in S(\{0,\ldots,d-1\})$. For some $x_0$ we set $x_j=x_0+\sum_{k=1}^j \alpha_k$ and $f$ is defined in terms of a partition into intervals $I_j\supset (x_j,x_{j+1})$  by translations
$$
f_j(x)=x+\tau_j
$$
for $x\in I_j$. The $\tau_j:=\sum_{k: \sigma(k)<\sigma(j)}\alpha_k-\sum_{k<j} \alpha_k$, are such that the intervals are reassembled in an order defined by the permutation $\sigma$. Note that the set of pre-images and images of the discontinuities will be countable and so the value of $f$ on the discontinuities can be ignored if the aim is to characterise a full Lebesgue measure subset of $I$.

There is a deep theory for the dynamics of iterated IETs; see for instance \cite{KA, Ke, V1, V4, AG1}. It is known that almost all IETs which are not irrational rotations are weakly mixing \cite{AG1}, though not (strongly-)mixing \cite{KA}. They give insight into many related concepts, such as Lagrange's Theorem \cite{BC}, continued fractions and the Gauss map \cite{Z1}, Khinchin's Theorem \cite{LM} and Sturmian shifts \cite{FZ1}.  IETs have deep connections in the study of measurable foliations and translation surfaces \cite{M1}. 

\textit{Piecewise isometries (PWIs)} represent a generalisation of IETs to higher dimensions and arbitrary metric spaces. They consist of $(\mathcal{P},F)$ where $\mathcal{P}$ is a partition of some metric space $P$ into convex pieces $P_j$, and $F$ is a map that maps the pieces through isometries onto its image \cite{Ash02,Goetz1,Goetz2}: on the complex plane this means that, for example, if $z\in P_j$ then we have
$$
F_j(z)= z\exp(i \tau_j)+\beta_j.
$$
Even for such orientation-preserving cases, the dynamics of PWIs are much less understood than IETs. For PWIs that are not bijections, one can reduce to a bijection (almost everywhere) by taking $P$ the maximal invariant set $P_{\max}=\ol{\cap_{k>0} F^k(P)}$. This maximal invariant set can be divided into the exceptional set $\mathcal{E}$ - consisting of the orbit of the discontinuity and the orbits which accumulate on it - and its complement, the regular set $\mathcal{R}$ - consisting of a packing of convex periodic islands.

It is known that PWIs have zero topological entropy \cite{Buz} and there are conjectured conditions for a PWI to have sensitive dependence on initial conditions in the exceptional set \cite{BK}. Mostly it has been specific examples that have been studied \cite{GA, GP, AKT, LKV, AG06} and PWIs are known to exhibit a seemingly wide variety of behaviour not seen with IETs, such as the appearance of unbounded periodicity which accumulates on the exceptional set. The example given in \cite{AKT} shows that the exceptional set can be a Cantor set on which the dynamics is minimal and uniquely ergodic. 

%Also present in all these examples is a renormalizability which structures the unbounded periodicity seen in the dynamics, that in many cases (though not in general) is specifically self-affinity. %Although renormalization in IETs is well described by Rauzy-Veech induction and the Zorich transformation, there is as yet very little known about the renormalizability of PWIs in general except for special cases \cite{P19}.

%The phase space of a typical smooth area-preserving map derived from a Hamiltonian system is divided into regions of regular and chaotic motion \cite{C83}, with KAM curves splitting the domain into regions of chaotic and periodic dynamics \cite{MP}. 

PWIs have been studied as linear models for the standard map (e.g. \cite{Ash97}), and are known to exhibit similar phenomena in terms of regions of regular and chaotic motion. Unlike IETs which are typically ergodic, there is plenty of evidence, for example in \cite{AG06a}, that the exceptional set may not have dense trajectories for many families of PWIs. One of the obstructions to ergodicity in the exceptional set is the existence of invariant curves that prevent orbits from crossing. Similarly, \cite{AG06} presents a PWI on the plane which consists of a permutation of four cones by rational rotations, which was shown by the authors to admit an uncountable number of closed, piecewise linear curves on which the dynamics is conjugate to a transitive IET.

%For cases where the exceptional set is a union of annuli a small perturbation in the rotational parameters causes it to decompose into invariant curves and periodic orbits, a phenomena that is reminiscent of KAM curves. 

It is clear from \cite{AGPR} that both trivial invariant curves (consisting of unions of line segments or circle arcs) and nontrivial invariant curves (that are not of this form) can appear as embeddings of IETs in PWIs. In \cite{AGPR} it is shown that there are no non-trivial embeddings of 2-IETs into 2-PWIs, but and that there is a 3-PWI that admits a non-trivial embedding of any 3-IET. 
Beyond these results, there is very little known about non-trivial invariant curves in piecewise isometries, and even the seemingly simpler ``trivial'' variety of invariant curves (those which consist of arcs and line segments) have up until now have been studied very little. 

In this paper we study two particular cases of PWIs: in the first, we exactly describe the maximal invariant set and the dynamics on it; in the second case, we produce a sequence of invariant curves that gives useful bounds on the maximal invariant set. 
%In this paper, we investigate invariant objects within the phase spaces of a class of planar PWIs.  
Our main results are as follows.
In Theorem \ref{InvSetThm}, we provide an exact description of the maximal invariant set and global attractor for a class of PWIs.
In Theorem \ref{invariantcurves}, we prove the existence of a sequence of 2-parameter families of piecewise linear (polygonal) invariant curves for a class of PWIs, which are continuous in both parameters; these curves are embeddings of IETs, and also form part of the boundary of an invariant region.
In Theorem \ref{densitythm}, we establish sufficient and necessary conditions for the density (or periodicity everywhere) of orbits on the aforementioned invariant curves due to the underlying IET.

The paper is organized as follows. In Section 2, we introduce definitions and terminology relevant to the results of this paper.  In Section 3, we investigate a class of PWIs with three unbounded atoms, and state and prove Theorem \ref{InvSetThm} as well as describe the dynamics on the maximal invariant set.  In section 4, we study a class of PWIs with four unbounded atoms. We state and prove Theorem \ref{invariantcurves} and Theorem \ref{densitythm}.  In section 5, we give concluding remarks and discuss some open questions.

\section{Interval Exchange Transformations and Translated Cone Exchanges}

We consider a particular class of PWIs, Translated Cone Exchange Transformations, introduced in \cite{AGPR}.  These are described by a partition of the upper half of the complex plane divided into cones $P_0$, $P_1$, ..., $P_{d+1}$ (for some $d > 2$) which share a common vertex at the origin.  The cones are permuted via rotation and translation while the end cones are only translated.

Let $\mathbb{H}= \{z \in \C: \Im(z) > 0\} \subset \C$ denote the upper half plane and let $\mathbb{B}^{d+2}$ be the $(d+1)$-simplex:
\begin{equation*}
\mathbb{B}^{d+2} = \left\{ \alpha = (\alpha_0, \ldots, \alpha_{d+1}) \in (0, \pi)^{d+2} : \sum_{j=0}^{d+1} \alpha_j = \pi \right\}.
\end{equation*}
Given $\alpha = (\alpha_0, \ldots, \alpha_{d+1}) \in \mathbb{B}^{d+2}$ we partition the interval $J = [0,\pi]$ into subintervals
\begin{equation*}
\begin{cases}
		J_0\supset (0, \alpha_0),												& \text{ if } j = 0, \\[1.5mm]
		J_j\supset \left( \sum \limits_{k=0}^{j-1} \alpha_k, \sum_{k=0}^j \alpha_k \right),	& \text{ if } j \in \{1,\ldots,d+1\}
		\end{cases}
\end{equation*}
For such a $\alpha\in\bbB^{d+2}$ and permutation $\sigma\in S(\{0, \ldots, d+1\})$ there is a $d+2$-IET $g$ with partition $\{J_j\}_{j=0}^{d+1}$ for $j=0,\ldots,d+1$ that acts on $[0,\pi]$; we write $g_j(x)=x+\tau_j$ if $x\in J_j$ for $j=0..d+1$. The partition $\{J_j\}$ induces a partition of $\bbH$ into cones
\begin{equation}
P_j\supset\{z\in \bbH~:~\arg(z)\in J_j\}.
\end{equation}
for $j=0..d+1$ (for definiteness we assign $z=0$ to lie in $P_0$).

Given $\alpha\in\bbB^{d+2}$ and permutation $\sigma\in S(\{0, \ldots, d+1\})$ and $(\beta_0,\cdots,\beta_{d+1})\in\bbC^{d+2}$ we define similarly to \cite{AGPR} a general {\em translated cone exchange} (TCE) with this data as a map
$F:\ol{\bbH}\rightarrow \C$ such that
$$
F_j(z)=  |z| \exp (ig(\arg(z))) +\beta_j =  z \exp (i\tau_j) +\beta_j
$$
if $z\in P_j$. Figure~\ref{fig:P} shows an example for $d=4$ such a partition into such as set of cones $\calp$
\begin{equation*}
\calp = \left\{ P_j : j \in \{0, \ldots, d+1\} \right\},
\end{equation*}
We consider, as in \cite{AGPR}, TCEs such that $\sigma(0)=0$ and $\sigma(d+1)=d+1$ meaning the permutation only affects the cones in $P_1,\cdots P_d$, and we take the particular set of translations
$$
\begin{aligned}
\beta_0=& -1, \\ 
\beta_j=&-\eta,~~\mbox{ if }j=1,\ldots,d\\
\beta_{d+1}& = \lambda
\end{aligned}
$$
for $0<\lambda<1$ real, where $\eta=1-\lambda$.

\begin{figure}
\centering
\includegraphics[width=0.7\linewidth]{{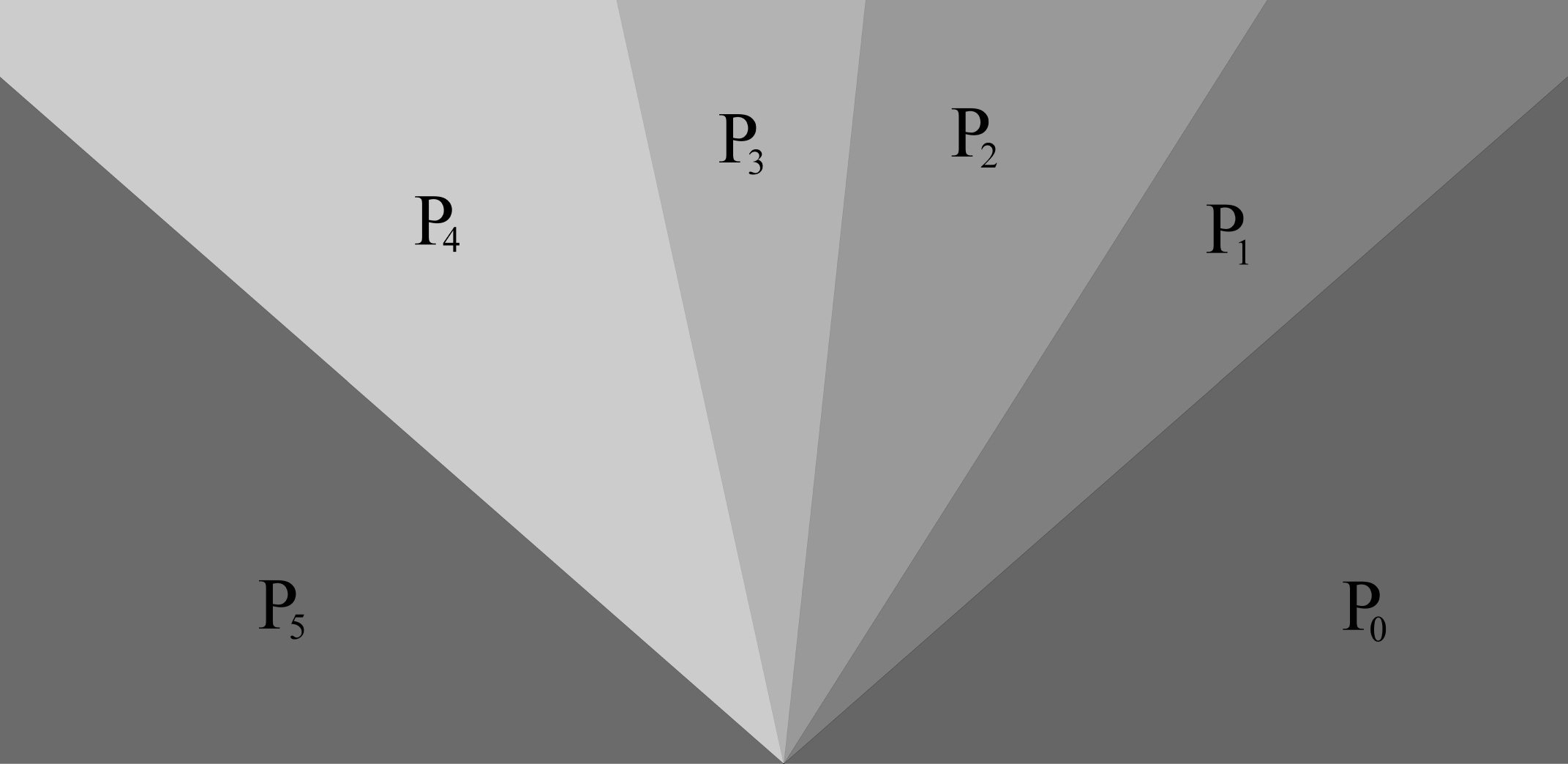}}

(a)

~

\includegraphics[width=0.7\linewidth]{{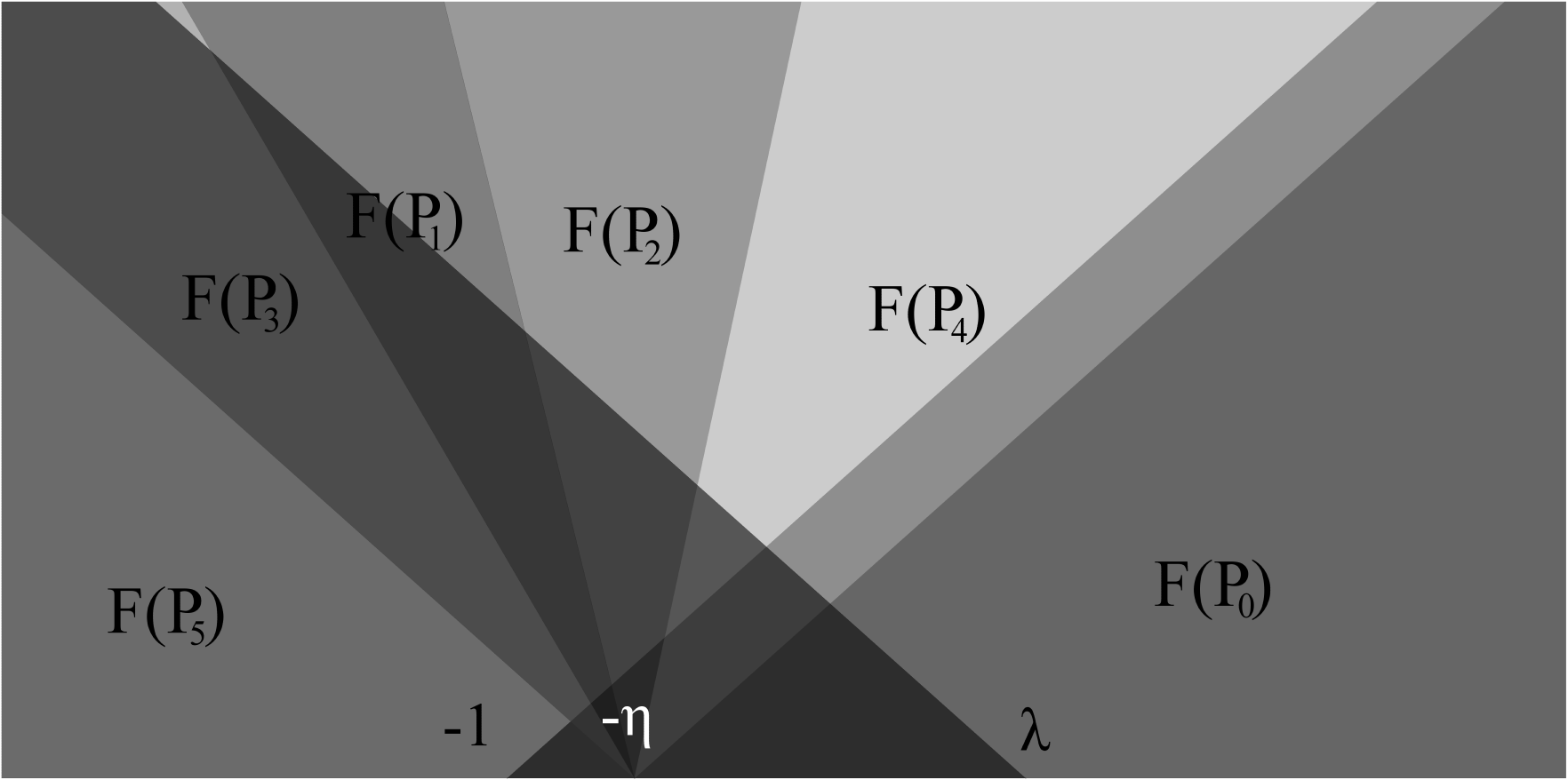}}

(b)

\caption{(a): An example of a partition $\calp$ of the closed upper half plane $\overline{\mathbb{H}}$ into $d+2=6$ cones. (b) The partition $\calp$ is transformed by a translated cone exchange $F$.  Note firstly that the $d=4$ cones $P_1$, $P_2$, $P_3$ and $P_4$ have been permuted via rotation about their shared vertex and translated together while the outer two cones have been translated to overlap the cone vertex.}
\label{fig:P}
\end{figure}

In summary, we consider in this paper a family of TCEs parametrized by $0<\lambda<1$, $\alpha\in \bbB^{d+1}$ and $\sigma\in S(\{0,\ldots,d+1\})$ where we require $\sigma(0)=0$ and $\sigma(d+1)=d+1$. For this data we define $F : \ol{\mathbb{H}} \rightarrow \ol{\mathbb{H}}$ by
\begin{equation}
\label{eq:F}
F(z) =   \begin{cases}
                z -1      &\text{if $x \in P_0$}, \\
                z\exp(i\tau_j) -\eta    &\text{if $z \in P_j, j \in \{1,\ldots,d\}$}, \\
                z + \lambda    &\text{if $z \in P_{d+1}$},
                \end{cases}
\end{equation}
where $\eta=1-\lambda$ and
\begin{equation}
\label{eq:tau}
\tau_j = \sum_{\sigma(k) < \sigma(j)} \alpha_k - \sum_{k < j} \alpha_k.
\end{equation}
This map is such that there is a $2$-IET on the interval $[-1,\lambda]$ that permutes the intervals $[-1,0]$ and $[0,\lambda]$ on
in the real axis within $\ol{\bbH}$ and such that for some region in the upper half plane, above this interval, the map is invertible: see Figure~\ref{fig:P}. 

There are many unanswered questions even for this quite restricted family of TCEs \cite{AGPR}. For example, it is conjectured that this family of TCEs will have a maximal invariant set with positive Lebesgue measure. The specific question we address here is whether the maximal invariant set contains a neighbourhood of the baseline in $\ol{\bbH}$.

Now consider any $k$-IET $(\cali,f)$ on $I$ with data $\alpha'$ and $\sigma'$. If there is a map $\gamma:I\rightarrow \ol{\bbH}$ that is a homeomorphism onto its image such that
\begin{equation}
F\circ \gamma (x) =\gamma \circ f(x)
\end{equation}
except on a countable set of $x$, then we say (as in \cite{AGPR}) that $\gamma(I)$ is a {\em continuous embedding} of the IET $f:I\rightarrow I$ into $F:\ol{\bbH}\rightarrow \ol{\bbH}$. We say an embedding $\gamma$ of an IET into a PWI is a \textit{linear embedding} if $\gamma(I)$ is a union of lines, and it is a \textit{fan embedding} if $\gamma(I)$ is a union of segments of circles. Recall from \cite{AGPR} that we call $\gamma$ a trivial embedding if it is a linear or a fan embedding, and that paper demonstrated that there can be nontrivial embeddings.

\section{A TCE interpolating between IETs}

We start by considering a simple but nontrivial case where $d=1$, and we consider the TCE (\ref{eq:F}) for parameters
\begin{equation}
\label{eq:parameters1}
\alpha \in \mathbb{B}^3, ~0<\lambda<1, ~\text{and } \sigma \text{~the identity permutation}.
\end{equation}

\begin{figure}
    \centering
    \includegraphics[width=0.8\linewidth]{{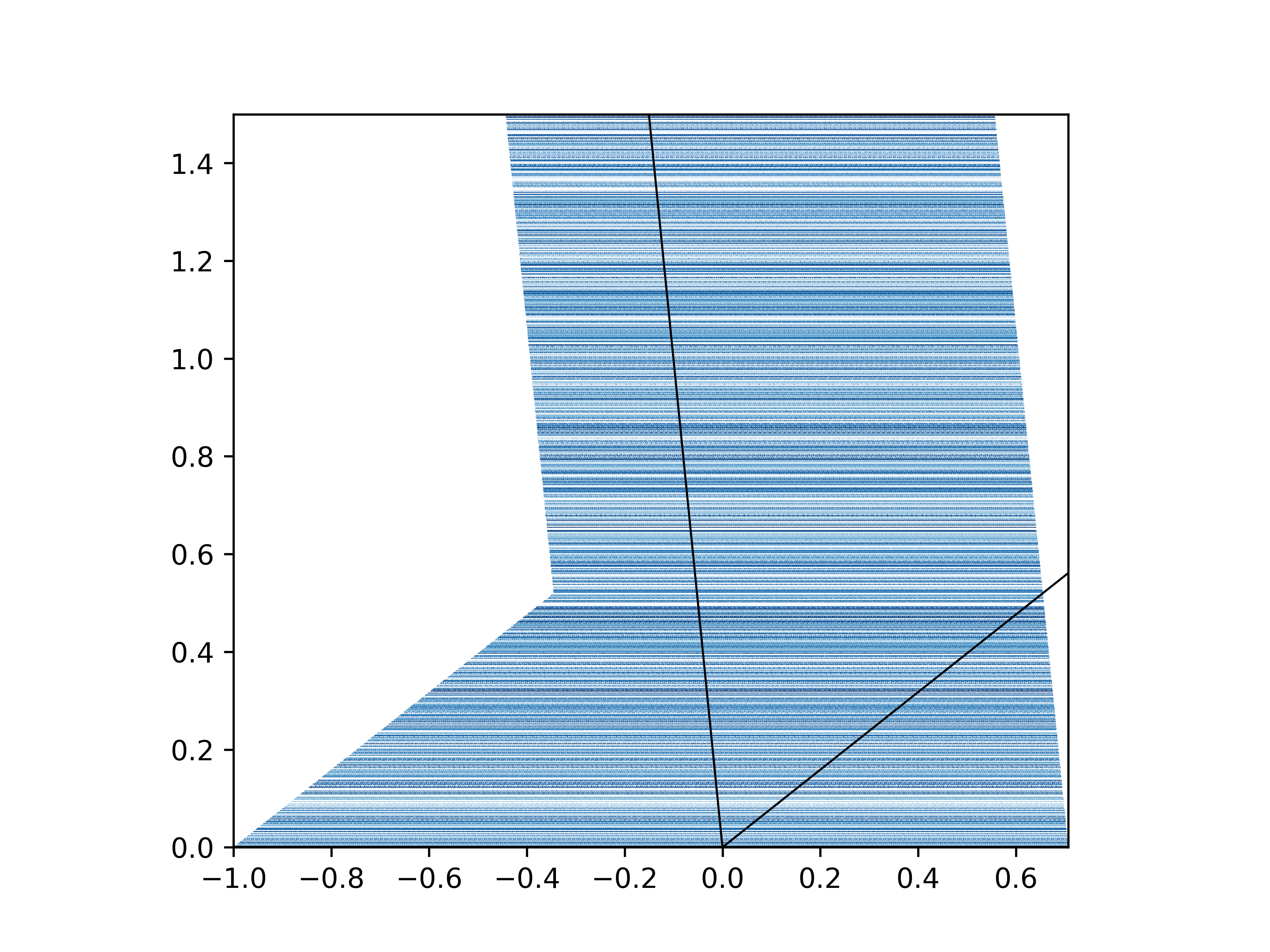}}
    \caption{A plot of the first 2000 iterates (after removing 1000 iterates to omit transients) of 1500 points, chosen uniformly within the box $[-1,\lambda] \times [0,1.5]$, under a TCE of the form \eqref{eq:parameters1} with parameters $\alpha = (\frac{\pi}{2}-0.9,1,\frac{\pi}{2}-0.1)$ and $\lambda = \frac{\sqrt{2}}{2}$.  Observe that the attractor here is stratified since the map consists only of horizontal translations.}
    \label{fig:TCEnorot}
\end{figure}

We show that the maximal invariant set in $\ol{\bbH}$ is the set depicted in Figure~\ref{fig:TCEnorot} and the dynamics of this consists of an interpolation between two Interval exchanges, one on each end.

Since $\tau_0 = \tau_1 = \tau_2 = 0$, the map $F$ is a piecewise translation which preserves horizontal lines $\R + iy = \{t + iy : t \in \R\}$, for all $y \geq 0$.  In particular, the action of $F$ on $\R+iy$ is conjugate to an interval translation map $h_y: \R \rightarrow \R$,
\begin{equation*}
h_y(x) =    \begin{cases}
            x - 1       &\text{if $x > \frac{y}{\tan\alpha_0}$}, \\
            x - \eta    &\text{if $-\frac{y}{\tan\alpha_2} < x < \frac{y}{\tan\alpha_0}$}, \\
            x + \lambda &\text{if $x < -\frac{y}{\tan\alpha_2}$},
            \end{cases}
\end{equation*}
via the conjugation
\begin{equation}
\label{eq:Fhconjugation}
F|_{\R+iy}(z) = h_y(z-iy)+iy.
\end{equation}

\begin{theorem}
\label{InvSetThm}
Let $F$ be a TCE \eqref{eq:F} with parameters \eqref{eq:parameters1} and let $M$ denote the set
\begin{equation}
\label{eq:M}
M = ((P_0 - 1) \cup (P_1 - \eta)) \cap (P_2 + \lambda).
\end{equation}
Then $M$ is the maximal invariant set and global attractor for $F$. For all $y\geq 0$, let
\begin{equation} 
M_y:=M\cap (\R +iy)
\label{eq:foliation}
\end{equation}
and note that this is an invariant set for $F$. The dynamics on $M$ consist of an IET on each $M_y$. Moreover, there is a $y^*>0$ such that there are 3-IETs for each $M_y$ with $0<y<y^*$ that interpolate between one 2-IET on $M_0$ and another 2-IET on each $M_y$ with $y\geq y^*$.
\end{theorem}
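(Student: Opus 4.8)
The plan is to exploit the hypothesis $\tau_0=\tau_1=\tau_2=0$, which makes $F$ a piecewise translation preserving every horizontal line $\R+iy$ and acting there, via the conjugacy \eqref{eq:Fhconjugation}, as the interval translation map $h_y$; thus $M=\bigcup_{y\ge0}M_y$ with $M_y\subset\R+iy$, and the whole statement can be read off from the one-parameter family $h_y$. The first step is to compute $M_y$. With $a=a(y)=y\cot\alpha_0$ and $b=b(y)=y\cot\alpha_2$, the slices on $\R+iy$ of the three translated cones are $(P_0-1)\cap(\R+iy)=(a-1,\infty)+iy$, $(P_1-\eta)\cap(\R+iy)=(-b-\eta,a-\eta]+iy$ and $(P_2+\lambda)\cap(\R+iy)=(-\infty,\lambda-b]+iy$; a short computation --- using $\cot\alpha_0+\cot\alpha_2=\sin\alpha_1/(\sin\alpha_0\sin\alpha_2)>0$, which holds because $\alpha_0+\alpha_2=\pi-\alpha_1\in(0,\pi)$ --- then shows that $M_y$ is always a single interval, with a change of description at $y^*:=\lambda/(\cot\alpha_0+\cot\alpha_2)>0$: namely $M_0=[-1,\lambda)$, $M_y=(a-1,\lambda-b]$ of length $1+\lambda-(a+b)$ for $0<y<y^*$, and $M_y=(-b-\eta,\lambda-b]$ of length $1$ for $y\ge y^*$ (the two formulas coinciding at $y=y^*$).

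Next I would show $M$ is invariant, $F|_M$ is a bijection, and $M$ is the global attractor. The only branch points of $h_y$ are $x=a$ and $x=-b$; a direct accounting of the translations shows the $h_y$-images of the branch-pieces lying in $M_y$ reassemble to cover $M_y$ exactly, so $h_y|_{M_y}$ is a bijection and hence $M_y$ --- and so $M$ --- is invariant and $F|_M$ is a bijection. For attraction, note that strictly to the right of $M_y$ the active branch of $h_y$ is $x\mapsto x-1$ or $x\mapsto x-\eta$, and strictly to the left it is $x\mapsto x+\lambda$, so every orbit of $h_y$ moves monotonically toward $M_y$; since the step sizes $1,\eta,\lambda$ are each at most the length of $M_y$, the orbit cannot skip over $M_y$ and must enter it, and then stay, which together with forward invariance gives the global-attractor claim. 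Maximality follows too: an invariant set strictly containing $M$ would contain a point whose forward orbit first enters $M$ after one or more steps, forcing two distinct preimages in that set of a point of $M$ and contradicting injectivity of $F$ there.

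The main content is the interval-exchange structure, obtained by locating $x=a,-b$ relative to $M_y$. For $y=0$ only $x=0$ is interior to $M_0$, which splits into pieces of lengths $1$ and $\lambda$ that are swapped: a $2$-IET. For $0<y<y^*$ one checks $a-1<-b<a<\lambda-b$, so both branch points are interior and $M_y$ splits into three nondegenerate pieces $L=(a-1,-b)$, $\mathrm{Mid}=(-b,a)$, $R=(a,\lambda-b)$ of lengths $1-(a+b)$, $a+b$, $\lambda-(a+b)$, whose images tile $M_y$ in the order $h_y(R),h_y(\mathrm{Mid}),h_y(L)$ --- a genuine $3$-IET with the order-reversing permutation. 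For $y\ge y^*$ the point $x=a$ has reached or passed the right endpoint of $M_y$, only $x=-b$ stays interior, and $M_y$ splits into two pieces of lengths $\eta$ and $\lambda$ that are swapped: a $2$-IET, the same one up to the translation by $-b(y)$ for every $y\ge y^*$, and distinct from the $M_0$ one (the carriers have lengths $1+\lambda$ and $1$). For the word ``interpolate'': as $y\to0^+$ the piece $\mathrm{Mid}$ shrinks to a point and $L,R$ converge, with matching translations and combinatorics, to the two pieces of the $2$-IET on $M_0$; as $y\to y^{*-}$ the piece $R$ shrinks to a point and $L,\mathrm{Mid}$ converge to the two pieces of the $2$-IET carried by $M_{y^*}$. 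Since the piece-lengths and the permutation vary continuously with $y$, the $3$-IETs on $M_y$, $0<y<y^*$, form a continuous interpolation between the $2$-IET on $M_0$ and the $2$-IET on the $M_y$ with $y\ge y^*$.

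The hard part is not a single idea but the casework in the last step: tracking, as $y$ crosses $0$ and $y^*$, precisely which branch points are interior to $M_y$ and in what order, and verifying in each regime that the branch images genuinely tile $M_y$ --- so that one has a bona fide IET rather than merely an interval translation map, and so that the limiting IETs at the two transition values really match up. All the inequalities needed reduce to comparing $a+b=y(\cot\alpha_0+\cot\alpha_2)$ with $0$, $\lambda$ and $1$, so the only structural inputs are $0<\lambda<1$ and $\cot\alpha_0+\cot\alpha_2>0$; the bookkeeping is elementary but must be handled with care at the transitions.
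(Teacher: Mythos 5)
Your argument is correct in substance and follows the same overall strategy as the paper's proof: reduce to the one-parameter family $h_y$ on horizontal lines via \eqref{eq:Fhconjugation}, compute the slice $M_y$ explicitly from the cone inequalities, and verify the IET structure in the three regimes separated by $y^*=\lambda/(\cot\alpha_0+\cot\alpha_2)$, which agrees with \eqref{eq:ystar}. The one genuinely different ingredient is the global-attraction step. The paper works two-dimensionally, decomposing each cone $P_j$ into ribbons of the form $(P_j+nc_j)\setminus(P_j+(n+1)c_j)$ that $F$ shifts inward one at a time, and then chasing which cone the orbit lands in after it exits; you instead argue on each slice that strictly outside $M_y$ the active branch of $h_y$ translates monotonically toward $M_y$ and that the step sizes $1,\eta,\lambda$ do not exceed $|M_y|$, so the orbit cannot overshoot. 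Your version is more elementary and keeps the entire proof on a single horizontal line (attraction, invariance and the IET casework all reduce to comparing $a+b=y(\cot\alpha_0+\cot\alpha_2)$ with $0$, $\lambda$ and $1$), at the cost of having to identify which branch is active just outside $M_y$ in each regime --- which you do, and which is the point where a careless version of the ``cannot skip over'' argument would fail. Your explicit description of the limiting behaviour at $y\to 0^+$ and $y\to y^{*-}$ is a welcome addition that the paper only gestures at.

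The one step that does not hold up as written is the maximality argument. $F$ is not injective on $\overline{\mathbb{H}}$ (the images of the three branches overlap near the origin), and there is no reason for $F$ to be injective on an arbitrary invariant set; indeed there are fully invariant sets not contained in $M$, for instance the half-infinite backward orbit $\{w, w+1, w+2, \ldots\}$ of a point $w$ far inside $P_0$ on a fixed horizontal line, together with the forward orbit of $w$. So ``$M$ is the maximal invariant set'' can only be meant in the attractor sense --- every orbit eventually enters and thereafter remains in $M$ --- which is precisely what both your proof and the paper's actually establish (the paper likewise proves only attraction plus invariance and asserts maximality). Drop or reformulate the injectivity claim; the rest stands.
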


\begin{proof}
We will first prove that for all $z \in \ol{\mathbb{H}}$, there is some $N \in \N$ for which $F^N(z) \in M$.  Then it will suffice to show that $M$ is $F$-invariant.

Firstly, observe that we can segment $P_0$ into ribbons (see figure \ref{fig:slices})
\begin{equation*}
P_0 = \bigcup_{n=0}^\infty (P_0 + n) \setminus (P_0 + n + 1),
\end{equation*}
so that for $n \geq 0$
\begin{equation*}
F((P_0 + n) \setminus (P_0 + n + 1)) = (P_0 + n-1) \setminus (P_0 + n).
\end{equation*}

\begin{figure}
    \centering
    \includegraphics[width=0.8\linewidth]{{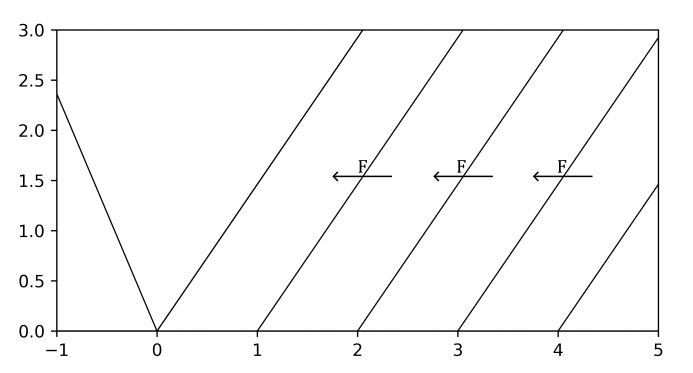}}
    \caption{An illustration of a construction used for the proof of Theorem \ref{eq:foliation}, which is a partition of $P_0$ into ribbons on which $F$ acts by shifting every ribbon to the one on its left.}
    \label{fig:slices}
\end{figure}

Therefore, for all $z \in P_0$, there is some $n \in \N$ for which $z \in (P_0 + n) \setminus (P_0 + n + 1)$, from which it follows that $F^{n+1}(z) \in (P_0 - 1) \setminus P_0$, i.e.
\begin{equation*}
F^{n+1}(z) \in (P_0 - 1) \cap (P_1 \cup P_2).
\end{equation*}
Thus, either $F^{n+1}(z) \in (P_0 - 1) \cap P_2 \subset M$, in which case we are done, or $F^{n+1}(z) \in P_1$.

Now suppose that $z \in P_1$.  Once again, we can partition $P_1$ into a union
\begin{equation*}
P_1 = \bigcup_{n=0}^\infty (P_1 + n\eta) \setminus (P_1 + (n+1)\eta).
\end{equation*}
with the property that
\begin{equation*}
F((P_1 + n\eta) \setminus (P_1 + (n+1)\eta)) = (P_1 + (n-1)\eta) \setminus (P_1 + n\eta).
\end{equation*}
It is clear that given $z \in P_1$, there is some $n \in \N$ for which $z \in (P_1 + n\eta) \setminus (P_1 + (n+1)\eta)$ so that 
\begin{equation*}
F^{n+1}(z) \in (P_1 - \eta) \setminus P_1 = (P_1 - \eta) \cap (P_0 \cup P_2).
\end{equation*}
It is clear that $P_1 - \eta \subset P_1 \cup P_2$, so in fact
\begin{equation*}
F^{n+1}(z) \in (P_1 - \eta) \cap P_2 \subset M.
\end{equation*}

Now suppose that $z \in P_2$.  Similarly as before we can segment $P_2$ into ribbons of the form
\begin{equation*}
P_2 = \bigcup_{n=1}^\infty (P_2 - n\lambda) \setminus (P_2 - (n+1)\lambda),
\end{equation*}
so that
\begin{equation*}
F((P_2 - n\lambda) \setminus (P_2 - (n+1)\lambda)) = (P_2 - (n-1)\lambda) \setminus (P_2 - n\lambda).
\end{equation*}
It is clear to see that for each $z \in P_2$, there is some $n \in \N$ such that $z \in (P_2 - n\lambda) \setminus (P_2 - (n+1)\lambda)$.  Hence, 
\begin{equation*}
F^{n+1}(z) \in (P_2 + \lambda) \setminus P_2 = (P_2 + \lambda) \cap (P_0 \cup P_1).
\end{equation*}
Now if $F^{n+1}(z) \in (P_2 + \lambda) \cap P_0$ then we are done, since $P_0 \subset (P_0 - 1)$.  Now it remains to show that $P_1 \cap (P_2 + \lambda) \subset M$.  But note that $P_1 \cap (P_2 - \eta) = \emptyset$, since $(P_2 - \eta) \subset P_2$ and $P_1 \cap P_2 = \emptyset$.  Therefore,
\begin{equation*}
P_1 \subset (P_0 - \eta) \cup (P_1 - \eta) \subset (P_0 - 1) \cap (P_1 - \eta),
\end{equation*}
from which it follows that $P_1 \cap (P_2 + \lambda) \subset M$.

We will now show that $M$ is $F$-invariant.
% Let $M_y = M \cap (\R + iy)$, so that
% \begin{equation}
% \label{eq:foliation}
% M = \bigcup_{y \geq 0} M_y.
% \end{equation}
Recall \eqref{eq:foliation} and let $M_y' = M_y - iy$.  Examining the components of the intersection \eqref{eq:M} defining $M$, we derive the following inequalities.  Firstly, from the $P_2 + \lambda$ component we see that for all $z \in M$, 
\begin{equation}
\label{eq:P2inequality}
\Re(z) < \lambda - \frac{\Im(z)}{\tan\alpha_2},
\end{equation}
Secondly, from the component $(P_0 - 1) \cup (P_1 - \eta)$, we find that for all $z \in M$,
\begin{equation}
\label{eq:P0P1inequalities}
\Re(z) > \frac{\Im(z)}{\tan\alpha_0} - 1, \text{~or } -\eta - \frac{\Im(z)}{\tan\alpha_2} < \Re(z) < -\eta + \frac{\Im(z)}{\tan\alpha_0}.
\end{equation}
Combining \eqref{eq:P2inequality} and \eqref{eq:P0P1inequalities}, we see that for all $z = x+iy \in M$,
\begin{equation*}
\min\left\{-1+\frac{y}{\tan\alpha_0},-\eta-\frac{y}{\tan\alpha_2}\right\} < x < \lambda-\frac{y}{\tan\alpha_2}.
\end{equation*}
Now to achieve a concrete definition of the map $h_y|_{M_y'}$, we now wish to find similar inequalities for $M \cap P_j$ for $j=0,1,2$.  Trivially, we see that $x+iy \in M \cap P_0$ if and only if
\begin{equation*}
\frac{y}{\tan\alpha_0} < x < \lambda - \frac{y}{\tan\alpha_2},
\end{equation*}
and $x+iy \in M \cap P_2$ if and only if
\begin{equation*}
\min\left\{-1+\frac{y}{\tan\alpha_0},-\eta-\frac{y}{\tan\alpha_2}\right\} < x < -\frac{y}{\tan\alpha_2}.
\end{equation*}
Finally, $x+iy \in M \cap P_1$ if and only if
\begin{equation*}
\max\left\{\min\left\{-1+\frac{y}{\tan\alpha_0},-\eta-\frac{y}{\tan\alpha_2}\right\},-\frac{y}{\tan\alpha_2}\right\} < x < \min\left\{\frac{y}{\tan\alpha_0},\lambda - \frac{y}{\tan\alpha_0}\right\}.
\end{equation*}
This can be simplified if we observe that
\begin{equation*}
\min\left\{-1+\frac{y}{\tan\alpha_0},-\eta-\frac{y}{\tan\alpha_2}\right\} \leq -\eta-\frac{y}{\tan\alpha_2} < -\frac{y}{\tan\alpha_2}.
\end{equation*}
Thus, $x+iy \in M \cap P_1$ if and only if
\begin{equation*}
-\frac{y}{\tan\alpha_2} < x < \min\left\{\frac{y}{\tan\alpha_0},\lambda - \frac{y}{\tan\alpha_0}\right\}.
\end{equation*}
We thus see that the map $h_y|_{M_y'}$ is defined by
\begin{equation}
\label{eq:IETslice}
h_y|_{M_y'}(x) =    \begin{cases}
            x - 1       &\text{if $\frac{y}{\tan\alpha_0} < x < \lambda - \frac{y}{\tan\alpha_2}$}, \\
            x - \eta    &\text{if $-\frac{y}{\tan\alpha_2} < x < \min\left\{\frac{y}{\tan\alpha_0},\lambda - \frac{y}{\tan\alpha_2}\right\}$}, \\
            x + \lambda &\text{if $\min\left\{-1+\frac{y}{\tan\alpha_0},-\eta-\frac{y}{\tan\alpha_2}\right\} < x < -\frac{y}{\tan\alpha_2}$},
                \end{cases}
\end{equation}
Let $y^* > 0$ such that $\frac{y^*}{\tan\alpha_0} = \lambda - \frac{y^*}{\tan\alpha_2}$.  Then through classic trigonometric identities, we can deduce that
\begin{equation}
y^* = \lambda\frac{\sin\alpha_0\sin\alpha_2}{\sin(\alpha_0+\alpha_2)}.
\label{eq:ystar}
\end{equation}
Now we can examine the map $h_y|_{M_y'}$ in the cases where $y=0$, $0<y<y^*$ and $y \geq y^*$.  Firstly, in the case of $y=0$, the definition \eqref{eq:IETslice} simplifies to
\begin{equation*}
h_0|_{M_0'}(x) = \begin{cases}
            x - 1       &\text{if $0 < x < \lambda$}, \\
            x + \lambda &\text{if $-1 < x < 0$},
                \end{cases}
\end{equation*}
from which it is trivial to see this is a IET of two intervals.  Secondly, the case $y \geq y^*$ corresponds to the inequality $\frac{y}{\tan\alpha_0} > \lambda - \frac{y}{\tan\alpha_2}$ and equivalently $-1+\frac{y}{\tan\alpha_0} > -\eta - \frac{y^*}{\tan\alpha_2}$. By combining these inequalities with \eqref{eq:IETslice}, we see that
\begin{equation*}
h_y|_{M_y'}(x) =    \begin{cases}
            x - \eta    &\text{if $-\frac{y}{\tan\alpha_2} < x < \lambda - \frac{y}{\tan\alpha_2}$}, \\
            x + \lambda &\text{if $-\eta-\frac{y}{\tan\alpha_2} < x < -\frac{y}{\tan\alpha_2}$},
                \end{cases}
\end{equation*}
which is trivially also an IET of two intervals.
Finally, we also see that for all $0 < y < y^*$,
\begin{equation*}
h_y|_{M_y'}(x) =    \begin{cases}
            x - 1       &\text{if $\frac{y}{\tan\alpha_0} < x < \lambda - \frac{y}{\tan\alpha_2}$}, \\
            x - \eta    &\text{if $-\frac{y}{\tan\alpha_2} < x < \frac{y}{\tan\alpha_0}$}, \\
            x + \lambda &\text{if $-1+\frac{y}{\tan\alpha_0} < x < -\frac{y}{\tan\alpha_2}$}.
                \end{cases}
\end{equation*}
We see that
\begin{equation*}
\begin{aligned}
\left( \frac{y}{\tan\alpha_0}, \lambda - \frac{y}{\tan\alpha_2} \right) - 1 = \left( -1 + \frac{y}{\tan\alpha_0}, -\eta - \frac{y}{\tan\alpha_2} \right), \\
\left( -\frac{y}{\tan\alpha_2}, \frac{y}{\tan\alpha_0} \right) - \eta = \left( -\eta - \frac{y}{\tan\alpha_2}, -\eta + \frac{y}{\tan\alpha_0} \right), \\
\left( -1 + \frac{y}{\tan\alpha_0}, -\frac{y}{\tan\alpha_2} \right) + \lambda = \left( -\eta + \frac{y}{\tan\alpha_0}, \lambda-\frac{y}{\tan\alpha_2} \right).
\end{aligned}
\end{equation*}
It is clear from this that for $0 < y < y^*$, $h_y|_{M_y'}$ is length-preserving, hence it is invertible, and therefore is an IET of three intervals.
In all three cases, the set $M_y'$ is $h_y$-invariant.  Therefore, since \eqref{eq:Fhconjugation} and $M = \bigcup_{y>0}M_y$, we conclude that $M$ is $F$-invariant, which completes the proof.
\end{proof}

The conjugation \eqref{eq:Fhconjugation} implies that $M_y'$ is the maximal invariant set and attractor for $h_y$.

To re-frame the one-dimensional slices $M_y$ into a two-dimensional perspective, we can observe that the set
\begin{equation*}
T = \left\{x+iy \in \C : -1+\frac{y}{\tan\alpha_0} < x < \lambda - \frac{y}{\tan\alpha_2}, 0 < y < y^*\right\},
\end{equation*}
which is also clearly equal to the union $\bigcup_{0 < y < y^*} M_y$, is an invertible PWI of three atoms $T \cap P_0$, $T \cap P_1$ and $T \cap P_2$, for which the bottom and top edges ($M_0$ and $M_{y^*}$ respectively) are IETs of two intervals.  In fact, the map $h_{y^*}|_{M_{y^*}'}$ is conjugate (via translation) to the first Rauzy induction of $h_0|_{M_0'}$, that is the first return map of $h_0$ to $[-1,0]$. See figure \ref{fig:TrapeziumMap} for an illustration of this mapping.  One could say that the map $(x,y) \mapsto h_y|_{M_y'}(x)$ is a form of ``homotopy'' between the IETs $h_0|_{M_0'}$ and $h_{y^*}|_{M_{y^*}'}$, although this function is only piecewise continuous and thus does not achieve the criteria to be a homotopy in the usual sense.

\begin{figure}
\centering
\includegraphics[width=0.7\linewidth]{{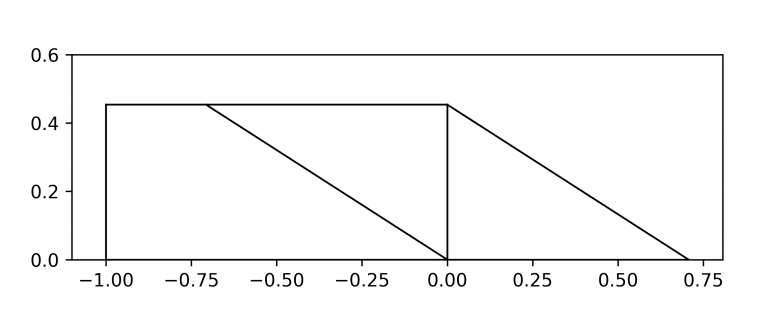}}

(a)

~

\includegraphics[width=0.7\linewidth]{{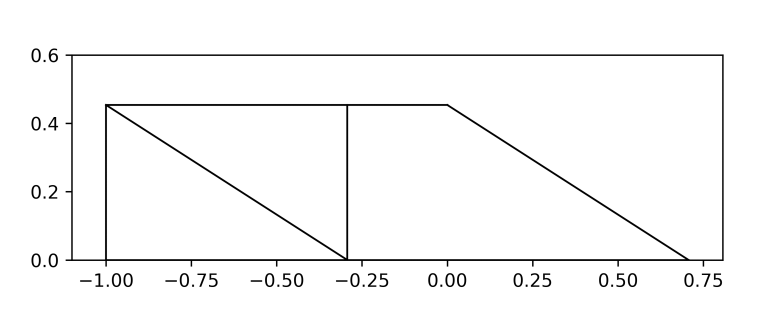}}

(b)

\caption{(a): The partition of the trapezium $T$, with cone angles $\alpha = (\pi/2,1,\pi/2-1)$ and translation parameter $\lambda =\sqrt{2}/2$, which gives rise to a piecewise translation.  Note that the discontinuity lines split the top and bottom boundaries into two subintervals. (b) The partition from (a) now transformed under the PWI.  Note that the subintervals of the top and bottom boundaries in each atom have exchanged like an IET.}
\label{fig:TrapeziumMap}
\end{figure}

On the other hand, the set 
\begin{equation*}
T' = \left\{x+iy \in \C : -\eta - \frac{y}{\tan\alpha_2} < x < \lambda - \frac{y}{\tan\alpha_2}, y \geq y^*\right\},
\end{equation*}
is a strip on which for every $y \geq y^*$, the map $h_y|_{M_y'}$ is conjugate via translations to $h_{y^*}|_{M_{y^*}'}$, i.e. $F$ acts on $T'$ by exchanging the strips
\begin{equation*}
\left\{x+iy \in \C : -\eta-\frac{y}{\tan\alpha_2} < x < -\frac{y}{\tan\alpha_2}, y \geq y^*\right\},
\end{equation*}
and
\begin{equation*}
\left\{x+iy \in \C : -\frac{y}{\tan\alpha_2} < x < \lambda - \frac{y}{\tan\alpha_2}, y \geq y^*\right\}.
\end{equation*}

\section{Some rotational TCEs with families of polygonal invariant curves}

For any value of  $0 < \phi < \frac{\pi}{2}$ and $0<\lambda<1$ we consider the TCE (\ref{eq:F}) for $d=2$ with
\begin{equation}
\label{parameters}
\begin{aligned}
\alpha 	&= \left( \frac{\pi}{2} - \phi, \phi, \phi, \frac{\pi}{2} - \phi \right), \\
\sigma  & = (0)(1~ 2)(3) 
\end{aligned}
\end{equation}
Define $\eta:=1-\lambda$ and 
\begin{equation}
N(\lambda,\phi):= \left\lfloor \frac{\lambda}{2\eta(1+\cos \phi)}\right\rfloor.
\label{eq:Ndef}
\end{equation}
and note that $N\in\N$ can be arbitrarily large by choosing $\lambda$ close to 1 (i.e. $\eta$ close to zero).

We claim that for each $n=1,\ldots,N$ there will be a linear embedding of an IET into the TCE with this data.

To this end we define
\begin{equation*}
l^{(n)}	= \lambda - 2n(1 + \cos\phi)\eta,
\end{equation*}
and note that $0 \leq l^{(n)} < l^{(n-1)}$ for all $1 \leq n \leq N$.

\begin{remark}
The conditions that $l^{(n)} \geq 0$ and $\lambda < 1$ are equivalent to the inequality
\begin{equation}
\label{initialineq}
1 - \frac{1}{2n(1 + \cos\phi) + 1} \leq \lambda < 1.
\end{equation}
This confirms that as $\lambda \nearrow 1$, $N(\lambda,\phi) \geq n$ for arbitrarily large $n \in \N$, and in fact for any $n \in \N$ $N(\lambda,\phi) = n$ if and only if
\begin{equation*}
1 - \frac{1}{2n(1 + \cos\phi) + 1} \leq \lambda < 1 - \frac{1}{2(n+1)(1 + \cos\phi) + 1}.
\end{equation*}
\end{remark}

Our aim is to prove the existence of a family of linear embeddings of IETs into TCEs of the kind \eqref{parameters}, and to do this we will first note that piecewise linear curves can be specified by the vertices that form the endpoints of each linear segment of the curve.

We will now define a finite sequence of points $(a_k^{(n)})_{k = 0}^{4n+3}$ (see figure \ref{morecurves}) by
\begin{equation}
\label{points}
\begin{aligned}
a_0^{(n)} 		&= n\eta(1 + \cos\phi) - 1 + in\eta \frac{1 + \cos\phi}{\tan\phi}, \\
a_1^{(n)} 		&= a_0^{(n)} + l^{(n)} = - \eta - n\eta(1 + \cos\phi) + in\eta \frac{1 + \cos\phi}{\tan\phi},\\
a_{k+1}^{(n)} 	&= 	\begin{cases}
				a_k^{(n)} + \eta 		& \text{ if $k$ is odd,} \\
				a_k^{(n)} + \eta\exp(i\phi) 	& \text{ if $k$ is even, } 1 < k \leq 2n+1, \\
				a_k^{(n)} + \eta\exp(-i\phi)	& \text{ if $k$ is even, } 2n+1 < k \leq 4n+2,
				\end{cases} \\
a_{4n+3}^{(n)}	&= a_{4n+2}^{(n)} + l^{(n)}.
\end{aligned}
\end{equation}
Let $I^{(n)} = \left[ 0, 2l^{(n)} + (4n+1)\eta \right)$ and define the points $t_k^{(n)}$, for $k \in \{0,\ldots,4n+2\}$, by
\begin{equation*}
t_k^{(n)} = \begin{cases}
            0,                      &\text{ if $k = 0$,} \\
            l^{(n)} + (k-1)\eta,        &\text{ if $k \in \{1,\ldots,4n+2\}$,} \\
            2l^{(n)} + (4n+1)\eta   &\text{ if $k = 4n+3$.}
            \end{cases}
\end{equation*} 
The sequence $(a_k^{(n)})$ defines the vertices for a piecewise linear curve $\gamma^{(n)} : I^{(n)} \rightarrow \ol{\mathbb{H}}$, that is
\begin{equation*}
\gamma^{(n)}(t) = \frac{t - t_k^{(n)}}{t_{k+1}^{(n)} - t_k^{(n)}}(a_{k+1}^{(n)} - a_k^{(n)}) + a_k^{(n)} ~\text{ for $t_k^{(n)} \leq t \leq t_{k+1}^{(n)}$,}
\end{equation*}
The curve $\gamma^{(n)}$ is the linear interpolation of the points $a_0^{(n)}$, $a_1^{(n)}$, ...,$a_{4n+3}^{(n)}$. We shall call $\gamma^{(n)}$ an \textit{$n$-stepped cap}.  

Given points $x_0, x_1, ..., x_{M-1} \in \C$, we will denote the open and bounded $M$-sided polygon with vertices $x_0$,...,$x_{M-1}$ and edges $(x_k,x_{k+1})$ and $(x_{M-1},x_0)$ by 
\begin{equation}
\label{polygon}
[x_0,\ldots,x_{M-1}].    
\end{equation}
When $N(\lambda, \phi) \geq n \geq 1$, let $Y^{(n)}$ denote the polygon
\begin{equation}
\label{Yregion}
Y^{(n)} = [-1,a_0^{(n)}, a_1^{(n)}, \ldots, a_{4n+3}^{(n)},\lambda].
\end{equation}
We call $Y^{(n)}$ an \textit{$n$-stepped pyramid}.  See figures \ref{morecurves} and \ref{mappedcurves}.

\begin{figure}
\centering
\includegraphics[width=0.8\linewidth]{{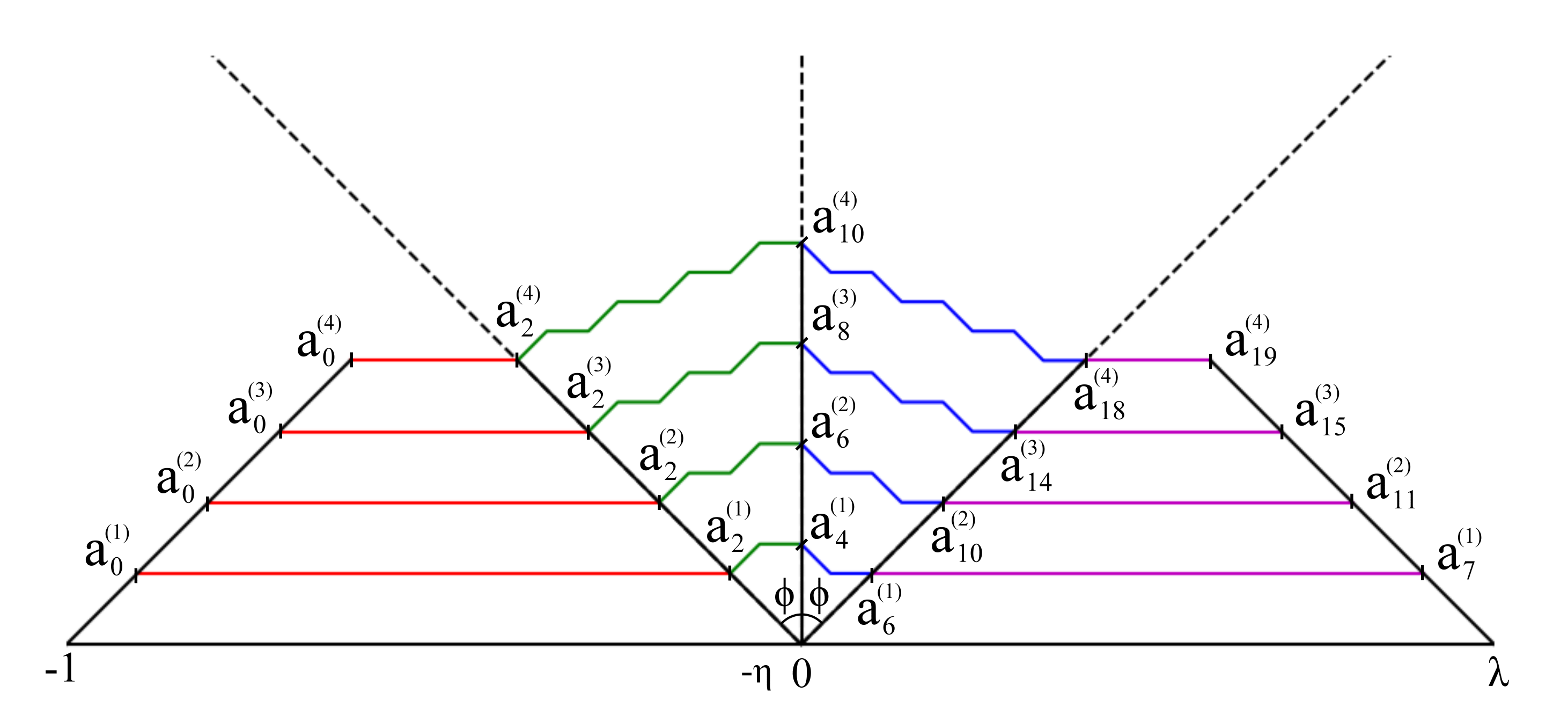}}
\caption{An illustration of $n$-stepped caps for $n = 1,2,3,4$ ascending from the bottom curve, with the same parameters as in figure \ref{094345}, which each form the ``cap'' of the boundary for their respective $n$-stepped pyramid.  Each vertex of the curves represents one of the points $a_k^{(n)}$, though only some points have been labelled.  The curves $\gamma^{(n)}$ have been coloured into the segments $\gamma^{(n)}(I_k^{(n)})$ from \eqref{curve partition}, i.e. $\gamma^{(n)}(I)$ is partitioned according to which cone the segment belongs to.}
\label{morecurves}
\end{figure}

\begin{figure}
\centering
\includegraphics[width=0.8\linewidth]{{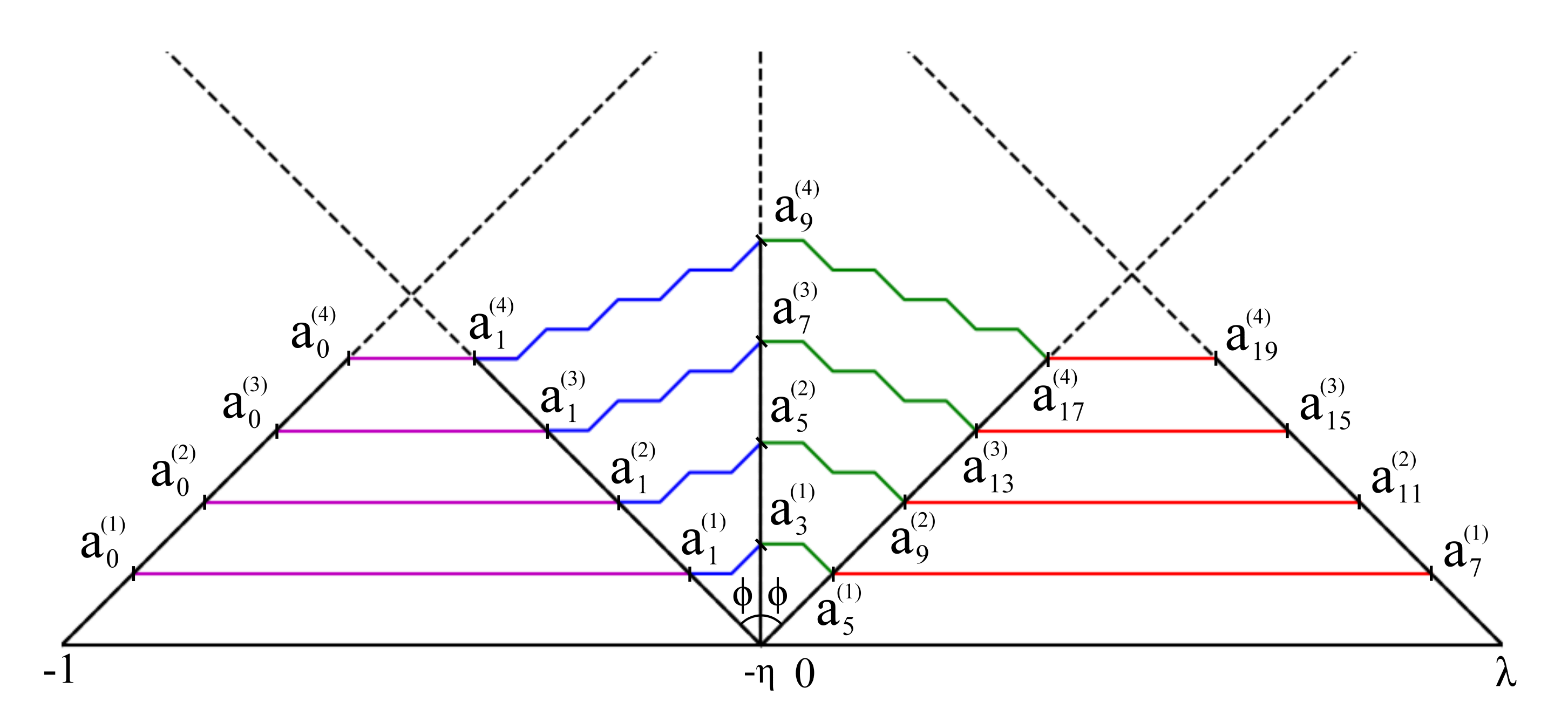}}
\caption{The $n$-stepped caps as in figure \ref{morecurves}, but mapped under the TCE $F$ with the same parameters.  Observe that the curves seem to be preserved, although the individually coloured pieces are rearranged within each curve into the partition $\gamma^{(n)}(I_k^{(n)}{}')$ as in \eqref{gammaprime}, which is akin to an interval exchange transformation.  Additionally note that the $n$-stepped pyramids also appear to be preserved under $F$.}
\label{mappedcurves}
\end{figure}

\begin{theorem}
\label{invariantcurves}
Consider the TCE $(\calp,F)$ defined by (\ref{eq:F}) with parameters (\ref{parameters}) and define $N=N(\lambda,\phi)$ as in (\ref{eq:Ndef}). Then for each $0\leq n\leq N$ there is an $n$-stepped cap $\gamma^{(n)}$ which is $F$-invariant and is a linear embedding of a 4-IET into $(\calp,F)$.  Moreover, when $N \geq 1$, for each $1 \leq n \leq N$ there is an $n$-stepped pyramid $Y^{(n)}$ that is $F$-invariant.
\end{theorem}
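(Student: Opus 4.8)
\emph{Plan.} Both assertions I would prove by explicit verification, decomposing $\gamma^{(n)}$ and $Y^{(n)}$ into the parts lying in the four cones $P_0,\dots,P_3$ and checking that the branches of $F$ reassemble these parts. Write $c:=n\eta(1+\cos\phi)$, so that $l^{(n)}=\lambda-2c$; the construction is rigged so that this value of $c$ makes the cone rays $\arg z\in\{\tfrac\pi2-\phi,\tfrac\pi2,\tfrac\pi2+\phi\}$ cut $\gamma^{(n)}$ precisely at vertices $a_k^{(n)}$, and the condition $n\le N$ is exactly what gives $0\le l^{(n)}$ and $c<\tfrac12$, the only geometric slack needed. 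First I would record the coordinates, e.g.\ $a_0^{(n)}=(c-1)+ic\cot\phi$ and $a_1^{(n)}=(-\eta-c)+ic\cot\phi$, note that every edge has positive real-part displacement (with lengths $\eta$ for $1\le k\le 4n+1$ and $l^{(n)}$ for the two base edges), and check, using $c=n\eta(1+\cos\phi)$, that $a_2^{(n)}$, $a_{2n+2}^{(n)}$, $a_{4n+2}^{(n)}$ lie on the rays $\arg z=\tfrac\pi2+\phi$, $\arg z=\tfrac\pi2$, $\arg z=\tfrac\pi2-\phi$ respectively (for instance the real parts accumulated from $a_1^{(n)}$ to $a_{2n+2}^{(n)}$ sum to $\eta+c$, giving $\Re a_{2n+2}^{(n)}=0$), while on each arc between consecutive such vertices the quantities $\Re z$ and $\Re z\pm\Im z\tan\phi$ are monotone, so the arc never re-crosses a cone boundary. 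This identifies the sub-arcs $\gamma^{(n)}|_{[t_0^{(n)},t_2^{(n)}]}\subset\overline{P_3}$, $\gamma^{(n)}|_{[t_2^{(n)},t_{2n+2}^{(n)}]}\subset\overline{P_2}$, $\gamma^{(n)}|_{[t_{2n+2}^{(n)},t_{4n+2}^{(n)}]}\subset\overline{P_1}$, $\gamma^{(n)}|_{[t_{4n+2}^{(n)},t_{4n+3}^{(n)}]}\subset\overline{P_0}$.

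\emph{The cap.} On each of these sub-arcs $F$ acts by a single branch of \eqref{eq:F}, which rotates edge directions by $\tau_j\in\{0,\phi,-\phi,0\}$; since the only directions present are $1,e^{\pm i\phi}$, this already pins down the edge pattern of the image. A few short computations, each again an instance of $c=n\eta(1+\cos\phi)$ (e.g.\ $a_2^{(n)}e^{-i\phi}$ lands on the ``peak height'' $c/\sin\phi$), give the vertex images $a_{4n+2}^{(n)}\mapsto a_0^{(n)}$, $a_{4n+3}^{(n)}\mapsto a_1^{(n)}$ on $\overline{P_0}$; $a_{2n+2}^{(n)}\mapsto a_1^{(n)}$, $a_{4n+2}^{(n)}\mapsto a_{2n+1}^{(n)}$ on $\overline{P_1}$; $a_2^{(n)}\mapsto a_{2n+1}^{(n)}$, $a_{2n+2}^{(n)}\mapsto a_{4n+1}^{(n)}$ on $\overline{P_2}$; $a_0^{(n)}\mapsto a_{4n+1}^{(n)}$, $a_2^{(n)}\mapsto a_{4n+3}^{(n)}$ on $\overline{P_3}$. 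Matching endpoints with edge patterns, $F$ carries the $\overline{P_0},\overline{P_1},\overline{P_2},\overline{P_3}$ sub-arcs bijectively onto $\gamma^{(n)}|_{[t_0^{(n)},t_1^{(n)}]}$, $\gamma^{(n)}|_{[t_1^{(n)},t_{2n+1}^{(n)}]}$, $\gamma^{(n)}|_{[t_{2n+1}^{(n)},t_{4n+1}^{(n)}]}$, $\gamma^{(n)}|_{[t_{4n+1}^{(n)},t_{4n+3}^{(n)}]}$ respectively (the $\overline{P_3}$ sub-arc is just the horizontal segment from $a_0^{(n)}$ to $a_2^{(n)}$ at height $c\cot\phi$, equal as a set to its target segment, so the fact that $F(a_1^{(n)})=a_1^{(n)}+\lambda$ is not a vertex of $\gamma^{(n)}$ is harmless). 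These four images tile $\gamma^{(n)}(I^{(n)})$, so $\gamma^{(n)}$ is $F$-invariant; transporting the conjugating map through the arclength parametrisation $\gamma^{(n)}$ yields a length- and orientation-preserving bijection of $I^{(n)}=[0,\,2l^{(n)}+(4n+1)\eta)$ whose continuity intervals $[t_0^{(n)},t_2^{(n)}]$, $[t_2^{(n)},t_{2n+2}^{(n)}]$, $[t_{2n+2}^{(n)},t_{4n+2}^{(n)}]$, $[t_{4n+2}^{(n)},t_{4n+3}^{(n)}]$ get reassembled in the reverse order --- i.e.\ a (possibly degenerate, if $n=0$ or $l^{(n)}=0$) $4$-IET. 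Since $\gamma^{(n)}(I^{(n)})$ is a finite union of segments and $\gamma^{(n)}$ is injective (strictly increasing real part), this embedding is linear, which is the first assertion.

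\emph{The pyramid.} By the first step the polygon $Y^{(n)}$ is the union of the four cone pieces $Q_j:=Y^{(n)}\cap\overline{P_j}$, each the polygon bounded by the two rays of $\partial P_j$ through the origin, a sub-arc of the cap, and a base or side edge of $Y^{(n)}$: explicitly $Q_0=[0,\lambda,a_{4n+3}^{(n)},a_{4n+2}^{(n)}]$, $Q_1=[0,a_{2n+2}^{(n)},\dots,a_{4n+2}^{(n)}]$, $Q_2=[0,a_2^{(n)},\dots,a_{2n+2}^{(n)}]$, $Q_3=[-1,0,a_2^{(n)},a_1^{(n)},a_0^{(n)}]$. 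Applying the branches of $F$ and using the cap computations together with $F(0)=-\eta$ on $\overline{P_1},\overline{P_2}$, $F(0)=-1$, $F(\lambda)=-\eta$ on $\overline{P_0}$, and $F(-1)=-\eta$, $F(0)=\lambda$ on $\overline{P_3}$, one finds $F(Q_0)=[-1,-\eta,a_1^{(n)},a_0^{(n)}]$, $F(Q_1)=[-\eta,a_1^{(n)},\dots,a_{2n+1}^{(n)}]$, $F(Q_2)=[-\eta,a_{2n+1}^{(n)},\dots,a_{4n+1}^{(n)}]$, and (as a set) $F(Q_3)=[-\eta,\lambda,a_{4n+3}^{(n)},a_{4n+1}^{(n)}]$, the last because the horizontal cap sub-arc $a_{4n+1}^{(n)}\to a_{4n+3}^{(n)}$ absorbs the stray point $a_1^{(n)}+\lambda=F(a_1^{(n)})$. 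Glued along the three segments $[-\eta,a_1^{(n)}]$, $[-\eta,a_{2n+1}^{(n)}]$, $[-\eta,a_{4n+1}^{(n)}]$ --- which all issue from the boundary point $-\eta$ in the directions up-left, straight up, up-right respectively --- these four polygons have pairwise disjoint interiors, and their outer boundary is exactly $[-1,\lambda]\cup[\lambda,a_{4n+3}^{(n)}]\cup\gamma^{(n)}\cup[a_0^{(n)},-1]=\partial Y^{(n)}$. Hence $\bigcup_j F(Q_j)=Y^{(n)}$, giving the claimed invariance.

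\emph{Main obstacle.} The real content is purely the bookkeeping in the first two steps: deciding which $a_k^{(n)}$ lies on which cone-boundary ray and checking that the intermediate edges never re-cross a boundary (this is precisely where $0\le l^{(n)}$ and $c<\tfrac12$, i.e.\ $n\le N$, are used), and then verifying that the reassembled pieces fill $\gamma^{(n)}(I^{(n)})$, resp.\ $Y^{(n)}$, with no gaps or overlaps --- the latter requiring a little extra care because $F$ is not globally injective, so one must actually place the images inside $Y^{(n)}$ rather than infer it from a measure count. The degenerate cases $n=0$ (two empty cone sub-arcs) and $l^{(n)}=0$ at $n=N$ (collapsed base edges, $a_0^{(n)}=a_1^{(n)}$ and $a_{4n+2}^{(n)}=a_{4n+3}^{(n)}$) should be noted but cause no difficulty.
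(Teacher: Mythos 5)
Your proposal is correct and follows essentially the same route as the paper: you verify that $a_2^{(n)}$, $a_{2n+2}^{(n)}$, $a_{4n+2}^{(n)}$ lie on the cone boundaries, split the cap into the four cone sub-arcs, track the vertex images under each branch of $F$ (your vertex correspondences $a_{4n+2}^{(n)}\mapsto a_0^{(n)}$, $a_{2n+2}^{(n)}\mapsto a_1^{(n)}$, $a_2^{(n)}\mapsto a_{2n+1}^{(n)}$, etc.\ are exactly the paper's identities \eqref{shiftinga1}--\eqref{induction2base}), and then treat the pyramid via the same decomposition $Q_j^{(n)}=Y^{(n)}\cap P_j$ with the images glued along the three segments issuing from $-\eta$. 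The only cosmetic difference is that you argue the images of the $Q_j^{(n)}$ tile $Y^{(n)}$ directly, whereas the paper checks $F(Y^{(n)})\subset Y^{(n)}$ plus boundary-only overlaps and concludes invariance up to a null set; both are sound.
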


\begin{proof}
Let $0 \leq n \leq N$ be an integer.  Observe that
\begin{equation}
\label{a2}
\begin{aligned}
a_2^{(n)}	&= a_1^{(n)} + \eta \\
		&= - n\eta(1 + \cos\phi) + in\eta \frac{1 + \cos\phi}{\tan\phi} \\
		&= n\eta \frac{1 + \cos\phi}{\sin\phi} (-\sin\phi + i\cos\phi) \\
		&= n\eta \frac{1 + \cos\phi}{\sin\phi} \exp\left(i \left(\phi + \frac{\pi}{2} \right)\right) \in \ol{P_2} \cap \ol{P_3}.
\end{aligned}
\end{equation}
Secondly, noting that
\begin{equation*}
\sin\phi + \frac{1 + \cos\phi}{\tan\phi} = \frac{1}{\sin\phi}( (\sin\phi)^2 + \cos\phi + (\cos\phi)^2 ) = \frac{1 + \cos\phi}{\sin\phi},
\end{equation*}
we have
\begin{equation}
\label{a2n}
\begin{aligned}
a_{2n+2}^{(n)}	&= a_2^{(n)} + \sum_{j=1}^n \eta + \sum_{j=1}^n \eta \exp(i\phi) \\
		&= - n\eta(1 + \cos\phi) + in\eta \frac{1 + \cos\phi}{\tan\phi} + n\eta + n\eta \exp(i\phi), \\
		&= in\eta \left( \sin\phi + \frac{1 + \cos\phi}{\tan\phi} \right) \\
		&= in\eta \frac{1 + \cos\phi}{\sin\phi} \in \ol{P_1} \cap \ol{P_2}.
\end{aligned}
\end{equation}
Thirdly, we have
\begin{equation}
\label{a4n}
\begin{aligned}
a_{4n+2}^{(n)}	&= a_{2n+2}^{(n)} + \sum_{j=1}^n \eta + \sum_{j=1}^n \eta \exp(-i\phi) \\
		&= in\eta \left( \sin\phi + \frac{1 + \cos\phi}{\tan\phi} \right) + n\eta + n\eta \exp(-i\phi), \\
		&= in\eta \sin\phi + in\eta\frac{1 + \cos\phi}{\tan\phi} + n\eta + n\eta \cos\phi - in\eta \sin\phi \\
		&= n\eta \frac{1 + \cos\phi}{\sin\phi} \left( \sin\phi + i\cos\phi \right) \\
		&= n\eta \frac{1 + \cos\phi}{\sin\phi} \exp\left(i\left( \frac{\pi}{2} - \phi \right)\right) \in \ol{P_0} \cap \ol{P_1}.
\end{aligned}
\end{equation}
We will partition the interval $I^{(n)}$ into four subintervals
\begin{equation}
\label{curve partition}
\begin{aligned}
I_0^{(n)}   &\supset (t_0^{(n)},t_2^{(n)}), \\
I_1^{(n)}   &\supset (t_2^{(n)},t_{2n+2}^{(n)}), \\
I_2^{(n)}   &\supset (t_{2n+2}^{(n)},t_{4n+2}^{(n)}), \\
I_3^{(n)}   &\supset (t_{4n+2}^{(n)},t_{4n+3}^{(n)}), \\
\end{aligned}
\end{equation}
so that $\gamma(I_j^{(n)}) \subset P_{3-j}$ for each $j$ by the above discussion, in particular \eqref{a2}, \eqref{a2n} and \eqref{a4n}.  

\begin{figure}
\centering
\includegraphics[width=\linewidth]{{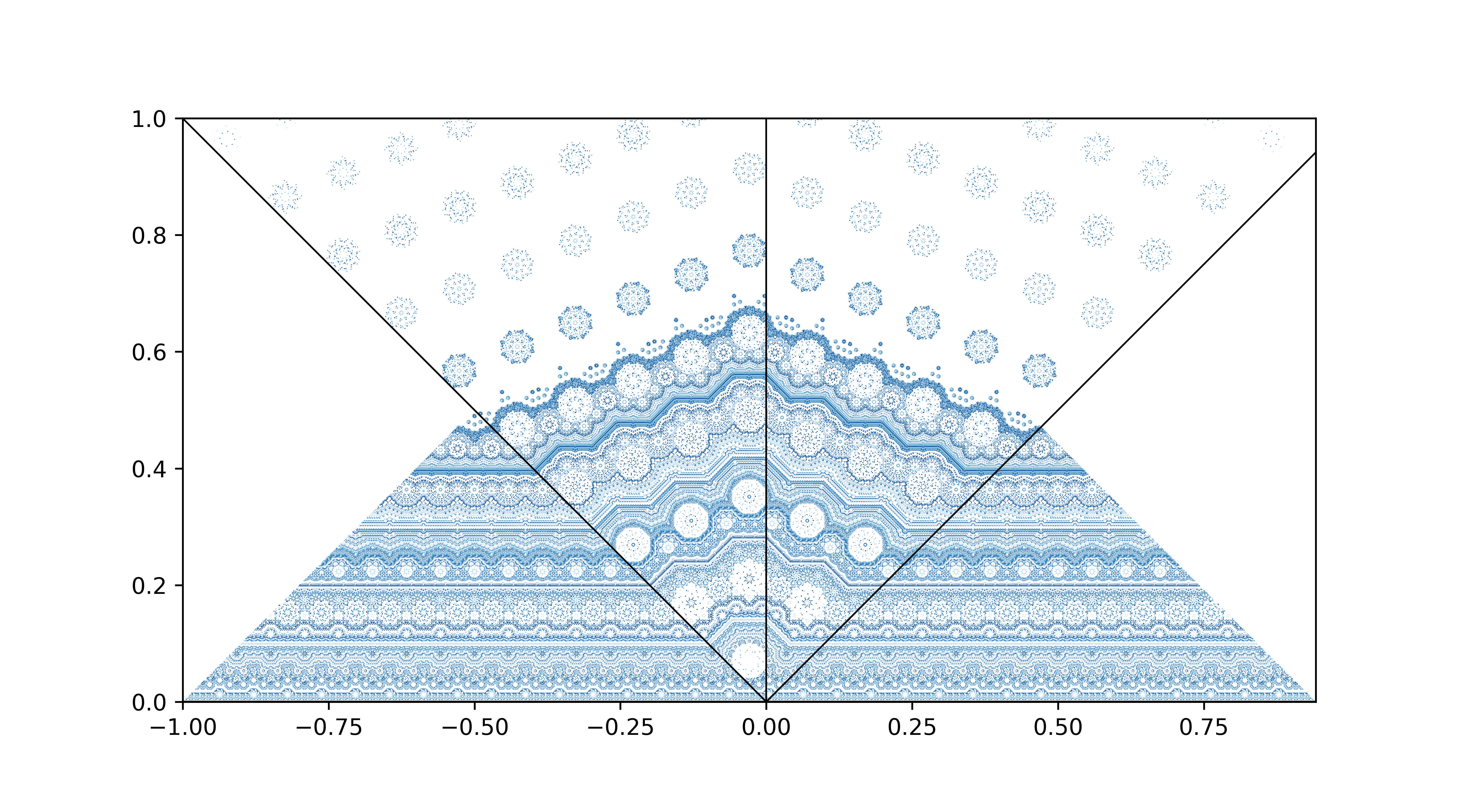}}
\caption{A plot of the first 1500 iterates (after removing 1000 iterates to omit transients) of 1000 points, chosen uniformly within the box $[-1,\lambda] \times [0,1]$, under the TCE with parameters $\phi = \pi/4$ and $\lambda = \frac{8 + 4\sqrt{2}}{9 + 4\sqrt{2}} + 0.01$.  Note the appearance of more curves with an increasing number of steps.}
\label{094345}
\end{figure}

Define another partition of $I^{(n)}$ by
\begin{equation}
\label{gammaprime}
\begin{aligned}
I_0^{(n)\prime}   &\supset (t_0^{(n)},t_1^{(n)}), \\
I_1^{(n)\prime}   &\supset (t_1^{(n)},t_{2n+1}^{(n)}), \\
I_2^{(n)\prime}   &\supset (t_{2n+1}^{(n)},t_{4n+1}^{(n)}), \\
I_3^{(n)\prime}   &\supset (t_{4n+1}^{(n)},t_{4n+3}^{(n)}). \\
\end{aligned}
\end{equation}
Then to prove the $F$-invariance of $\gamma^{(n)}(I^{(n)})$, it suffices to show that
\begin{equation}
\label{proofcondition}
F(\gamma^{(n)}(I_j^{(n)})) = \gamma^{(n)}(I_{3-j}^{(n)\prime}),
\end{equation}
for $j=0,1,2,3$.

Since the curve $\gamma^{(n)}$ is determined by the points $a_k^{(n)}$, we need only prove that \eqref{proofcondition} holds on the appropriate $a_k^{(n)}$.

Observe that by \eqref{a4n},
\begin{equation*}
\begin{aligned}
a_0^{(n)} 	&= n\eta(1 + \cos\phi) - 1 + in\eta \frac{1 + \cos\phi}{\tan\phi} \\
		&= n\eta \frac{1 + \cos\phi}{\sin\phi} \left( \sin\phi + i\cos\phi \right) - 1 \\
		&= a_{4n+2}^{(n)} - 1.
\end{aligned}
\end{equation*}
Hence,
\begin{equation*}
a_1^{(n)} = a_0^{(n)} + l^{(n)} = a_{4n+2}^{(n)} - 1 + l^{(n)} = a_{4n+3}^{(n)} - 1,
\end{equation*}
so by \eqref{curve partition} we have
\begin{equation*}
F(\gamma^{(n)}(I_3^{(n)})) = \gamma^{(n)}(I_3^{(n)}) - 1 = \gamma^{(n)}(I_0^{(n)\prime}).
\end{equation*}
Similarly, we have
\begin{equation*}
a_{4n+1}^{(n)} = a_{4n+2}^{(n)} - \eta = a_{4n+2} - 1 + \lambda = a_0^{(n)} + \lambda,
\end{equation*}
from which we deduce
\begin{equation*}
a_{4n+3}^{(n)} = a_{4n+1}^{(n)} + \eta + l^{(n)} = a_0^{(n)} + \lambda + \eta + l^{(n)} = a_2^{(n)} + \lambda,
\end{equation*}
and thus by \eqref{curve partition}, we get
\begin{equation*}
F(\gamma^{(n)}(I_0^{(n)})) = \gamma^{(n)}(I_0^{(n)}) + \lambda = \gamma^{(n)}(I_3^{(n)\prime}).
\end{equation*}
We know that
\begin{equation*}
F(\gamma^{(n)}(I_1^{(n)})) = \exp(-i\phi)\gamma^{(n)}(I_1^{(n)}) - \eta, 
\end{equation*}
since $\gamma^{(n)}(I_1^{(n)}) \subset P_2$.  Recall from \eqref{points} that for $1 < k \leq 2n+1$
\begin{equation*}
a_{k+1}^{(n)} = 	\begin{cases}
			a_k^{(n)} + \eta 			&\text{ if $k$ is odd}, \\
			a_k^{(n)} + \eta \exp(i\phi)	&\text{ if $k$ is even}.
			\end{cases}
\end{equation*}
From this, it follows from distributing and rearranging terms that
\begin{equation}
\label{induction1step}
\begin{aligned}
\exp(-i\phi) a_{k+1}^{(n)} - \eta	&= 	\begin{cases}
						\exp(-i\phi)( a_k^{(n)} + \eta ) - \eta 			&\text{ if $k$ is odd}, \\
						\exp(-i\phi)( a_k^{(n)} + \eta \exp(i\phi) ) - \eta	&\text{ if $k$ is even},
						\end{cases} \\
					&= 	\begin{cases}
						(\exp(-i\phi)a_k^{(n)} - \eta) + \eta \exp(-i\phi)		&\text{ if $k$ is odd}, \\
						(\exp(-i\phi)a_k^{(n)} - \eta) + \eta			&\text{ if $k$ is even}.
						\end{cases}
\end{aligned}
\end{equation}
In addition, we can see from comparison of \eqref{a2} and \eqref{a2n}
\begin{equation}
\label{comparison}
a_{2n+2}^{(n)} = \exp(-i\phi)a_2^{(n)}.
\end{equation}
By using \eqref{points} in the case $k = 2n+1$, we see that
\begin{equation}
\label{induction1base}
a_{2n+1}^{(n)} = a_{2n+2}^{(n)} - \eta = \exp(-i\phi)a_2^{(n)} - \eta.
\end{equation}
Using \eqref{induction1base} and \eqref{induction1step}, we can use an inductive argument to show that for $1 \leq k \leq 2n+1$,
\begin{equation}
\label{shiftinga1}
\exp(-i\phi)a_{k+1}^{(n)} - \eta = a_{2n + k}^{(n)}.
\end{equation}
Recalling the definition of $I_2^{(n)\prime}$ in \eqref{gammaprime}, we know that $\gamma^{(n)}(I_2^{(n)\prime})$ is the linear interpolation of the points, in order, $a_{2n+1}^{(n)}$, ...,$a_{4n+1}^{(n)}$.  Thus, it follows from this and \eqref{shiftinga1} that
\begin{equation*}
F(\gamma^{(n)}(I_1^{(n)})) = \exp(-i\phi)\gamma^{(n)}(I_1^{(n)}) - \eta = \gamma^{(n)}(I_2^{(n)\prime}).
\end{equation*}
In a similar fashion, recall from \eqref{curve partition} that $\gamma^{(n)}(I_2^{(n)}) \subset P_1$, so
\begin{equation*}
F(\gamma^{(n)}(I_2^{(n)})) = \exp(i\phi)\gamma^{(n)}(I_2^{(n)}) - \eta.
\end{equation*}
Recall from \eqref{points} that for $2n+1 < k \leq 4n+1$ that
\begin{equation*}
a_{k+1}^{(n)} = 	\begin{cases}
			a_k^{(n)} + \eta 			&\text{ if $k$ is odd}, \\
			a_k^{(n)} + \eta \exp(-i\phi)	&\text{ if $k$ is even}.
			\end{cases}
\end{equation*}
We thus have
\begin{equation}
\label{induction2step}
\begin{aligned}
\exp(i\phi)a_{k+1}^{(n)} - \eta	&= 	\begin{cases}
						\exp(i\phi)( a_k^{(n)} + \eta ) - \eta 			&\text{ if $k$ is odd}, \\
						\exp(i\phi)( a_k^{(n)} + \eta \exp(-i\phi) ) - \eta	&\text{ if $k$ is even},
						\end{cases} \\
					&= 	\begin{cases}
						(\exp(i\phi)a_k^{(n)} - \eta) + \eta \exp(i\phi)		&\text{ if $k$ is odd}, \\
						(\exp(i\phi)a_k^{(n)} - \eta) + \eta				&\text{ if $k$ is even}.
						\end{cases}
\end{aligned}
\end{equation}
Recalling \eqref{comparison}, and using \eqref{points} in the case $k = 1$, we see that
\begin{equation}
\label{induction2base}
a_1^{(n)} = a_2^{(n)} - \eta = \exp(i\phi)a_{2n+2}^{(n)} - \eta.
\end{equation}
Combining \eqref{induction2step} and \eqref{induction2base}, we can use another inductive argument to show that for $2n+1 \leq k \leq 4n+1$,
\begin{equation}
\label{shiftinga2}
\exp(i\phi)a_{k+1}^{(n)} - \eta = a_{k - 2n}^{(n)}.
\end{equation}
It is clear from the definition of $I_1^{(n)\prime}$ in \eqref{gammaprime} that $\gamma^{(n)}(I_2^{(n)\prime})$ is the linear interpolation of the points, in order, $a_1^{(n)}$, ..., $a_{2n+1}^{(n)}$.  Therefore, it follows from \eqref{shiftinga2} that
\begin{equation*}
F(\gamma^{(n)}(I_2^{(n)})) = \exp(i\phi)\gamma^{(n)}(I_2^{(n)}) - \eta = \gamma^{(n)}(I_1^{(n)\prime}).
\end{equation*}
We have thus proven \eqref{proofcondition}, from which it follows that
\begin{equation*}
\begin{aligned}
F(\gamma^{(n)}(I)) 	&= F(\gamma^{(n)}(I_0^{(n)})) \cup F(\gamma^{(n)}(I_1^{(n)})) \cup F(\gamma^{(n)}(I_2^{(n)})) \cup F(\gamma^{(n)}(I_3^{(n)})), \\
				&= \gamma^{(n)}(I_3^{(n)\prime}) \cup \gamma^{(n)}(I_2^{(n)\prime}) \cup \gamma^{(n)}(I_1^{(n)\prime}) \cup \gamma^{(n)}(I_0^{(n)\prime}) \\
				&= \gamma^{(n)}(I),
\end{aligned}
\end{equation*}
up to a set of zero one-dimensional Lebesgue measure. It is clear from the definitions in \eqref{gammaprime} that
\begin{equation*}
\begin{aligned}
I_0^{(n)\prime} &= I_3^{(n)} - t_{4n+2}^{(n)} = I_3^{(n)} - (l^{(n)} + (4n+1)\eta), \\
I_1^{(n)\prime} &= I_2^{(n)} - t_{2n+2}^{(n)} + t_1^{(n)} = I_2^{(n)} - (2n+1)\eta, \\
I_2^{(n)\prime} &= I_1^{(n)} - t_2^{(n)} + t_{2n+1}^{(n)} = I_1^{(n)} + 2n\eta, \\
I_3^{(n)\prime} &= I_0^{(n)} + t_{4n+1}^{(n)} = I_0^{(n)} + l^{(n)} + 4n\eta.
\end{aligned}
\end{equation*}
Hence, \eqref{proofcondition} implies that the mapping $(\gamma^{(n)})^{-1} \circ F \circ \gamma^{(n)} : I^{(n)} \rightarrow I^{(n)}$ is an IET of four subintervals with combinatorial data
\begin{equation}
\label{IET}
\begin{aligned}
\sigma 	&= 	(0~ 3)(1~ 2)\\
\xi^{(n)} 	&= (l^{(n)} + \eta, 2n\eta, 2n\eta, l^{(n)}).
\end{aligned}
\end{equation}
Note that the superscript $n$ here is not to be confused with Rauzy induction.  To prove that $Y^{(n)}$ is $F$-invariant, it suffices to show that each atom in $\mathcal{Q}^{(n)} = \calp \vee Y^{(n)} = \{ P \cap Y^{(n)} : P \in \calp \}$ intersects the others at most on its boundary, that their image under $F$ is contained in $Y^{(n)}$.
Recall the notation in \eqref{polygon} for denoting open polygons in $\C$.  Note that if $E: \C \rightarrow \C$ is any Euclidean isometry on $\C$, then
\begin{equation*}
\begin{aligned}
E([x_0, \ldots, x_{N-1}]) = [E(x_0), \ldots, E(x_{N-1})].
\end{aligned}
\end{equation*}
For $j = 0,1,2,3$, let 
\begin{equation}
\label{Qpartition}
Q_j^{(n)} = Y^{(n)} \cap P_j.
\end{equation}
By the definition of $Y^{(n)}$ in \eqref{Yregion} and the properties \eqref{a2}, \eqref{a2n} and \eqref{a4n} of the points $a_2^{(n)}$, $a_{2n}^{(n)}$ and $a_{4n+2}^{(n)}$ respectively, we have
\begin{equation*}
\begin{aligned}
Q_0^{(n)} 	&= [0, a_{4n+2}^{(n)}, a_{4n+3}^{(n)}, \lambda], \\
Q_1^{(n)} 	&= [0, a_{2n+2}^{(n)}, a_{2n+3}^{(n)}, \ldots, a_{4n+2}^{(n)}], \\
Q_2^{(n)} 	&= [0, a_2^{(n)}, a_3^{(n)}, \ldots, a_{2n+2}^{(n)}], \\
Q_3^{(n)}	&= [-1, a_0^{(n)}, a_2^{(n)}, 0].
\end{aligned}
\end{equation*}
Since $Q_j \subset P_j$ for each $j=0,1,2,3$, we have
\begin{equation}
\label{Q03}
\begin{aligned}
F(Q_0^{(n)}) 	&= [0, a_{4n+2}^{(n)}, a_{4n+3}^{(n)}, \lambda] - 1 = [-1, a_0^{(n)}, a_1^{(n)}, -\eta], \\
F(Q_3^{(n)}) 	&= [-1, a_0^{(n)}, a_2^{(n)}, 0] + \lambda = [-\eta, a_{4n+1}^{(n)}, a_{4n+3}^{(n)}, \lambda].
\end{aligned}
\end{equation}
Recall \eqref{shiftinga1}, then it is clear to see that
\begin{equation}
\label{Q1}
\begin{aligned}
F(Q_1^{(n)}) 	&= \exp(i\phi)[0, a_{2n+2}^{(n)}, \ldots, a_{4n+2}^{(n)}] - \eta \\
		&= [-\eta, \exp(i\phi)a_{2n+2}^{(n)} - \eta, \ldots, \exp(i\phi)a_{4n+2}^{(n)} - \eta] \\
		&= [-\eta, a_1^{(n)}, \ldots, a_{2n+1}^{(n)}].
\end{aligned}
\end{equation}
Similarly by recalling \eqref{shiftinga2} we get
\begin{equation}
\label{Q2}
\begin{aligned}
F(Q_2^{(n)}) 	&= \exp(-i\phi)[0, a_2^{(n)}, \ldots, a_{2n+2}^{(n)}] - \eta \\
		&= [-\eta, \exp(-i\phi)a_2^{(n)} - \eta, \ldots, \exp(-i\phi)a_{2n+2}^{(n)} - \eta] \\
		&= [-\eta, a_{2n+1}^{(n)}, \ldots, a_{4n+1}^{(n)}].
\end{aligned}
\end{equation}
It is clear from \eqref{Q03}, \eqref{Q1}, \eqref{Q2} that $F(Y^{(n)}) \subset Y^{(n)}$, and the only nonempty intersections are
\begin{equation*}
\begin{aligned}
\ol{F(Q_0^{(n)})} \cap \ol{F(Q_1^{(n)})} 	&= \{ -\eta t + (1-t)a_1^{(n)} : t \in [0,1] \}, \\
\ol{F(Q_1^{(n)})} \cap \ol{F(Q_2^{(n)})} 	&= \{ -\eta t + (1-t)a_{2n+1}^{(n)} : t \in [0,1] \}, \\
\ol{F(Q_2^{(n)})} \cap \ol{F(Q_3^{(n)})} 	&= \{ -\eta t + (1-t)a_{4n+1}^{(n)} : t \in [0,1] \},
\end{aligned}
\end{equation*}
and $\ol{F(Q_0^{(n)})} \cap \ol{F(Q_1^{(n)})} \cap \ol{F(Q_2^{(n)})} \cap \ol{F(Q_3^{(n)})} = \{-\eta\}$.  All of these intersections are on the boundaries of the atoms, and thus have 0 two-dimensional Lebesgue measure.  Therefore $Y^{(n)}$ is $F$-invariant up to zero measure.
\end{proof}

\begin{remark}
Note that when $n = 0$, we have $l^{(0)} = \lambda$, and the sequence $(a_k^{(0)})_{k=0}^3$ consists only of the four points
\begin{equation*}
a_0^{(0)} = -1,~a_1^{(0)} = -\eta,~a_2^{(0)} = 0,~a_3^{(0)} = \lambda.
\end{equation*}
The curve $\gamma^{(0)}$ is therefore simply the baseline interval $[-1,\lambda] = I^{(0)} - 1 = \gamma(I^{(0)})$, the action of $T$ on $[-1,\lambda]$ is almost-everywhere equal to the baseline IET, and the region $Y^{(0)}$ is the empty set (it is open with empty interior).

\end{remark}

When $N \geq 2$, this result has the following consequence.

\begin{corollary}[Invariant Layers]
\label{Invariant Layers}
Suppose that $N(\lambda,\phi) \geq 1$.  Then $Y^{(n)} \setminus Y^{(n-1)}$ is  $F$-invariant for all $1 \leq n \leq N$.
\end{corollary}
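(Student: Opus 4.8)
\emph{Proof proposal.} The plan is to derive this formally from Theorem~\ref{invariantcurves}, the only genuinely new input being that the pyramids are nested. First I would upgrade the conclusion of Theorem~\ref{invariantcurves}: its proof does more than show $F(Y^{(n)})\subseteq Y^{(n)}$ --- via the partition $\mathcal{Q}^{(n)}=\{Q_0^{(n)},\ldots,Q_3^{(n)}\}$ of \eqref{Qpartition} and the explicit images \eqref{Q03}, \eqref{Q1}, \eqref{Q2}, it shows that $F$ acts as a Euclidean isometry on each atom $Q_j^{(n)}$ and that the four images $F(Q_j^{(n)})$ have pairwise disjoint interiors. Since a Euclidean isometry preserves two-dimensional Lebesgue measure and $F(Y^{(n)})\subseteq Y^{(n)}$, comparing areas forces $F(Y^{(n)})=Y^{(n)}$ up to a set of measure zero (the discontinuity lines inside $Y^{(n)}$), so that $F|_{Y^{(n)}}$ is, off a null set, a bijection of $Y^{(n)}$ onto itself; the same holds for $Y^{(n-1)}$, trivially so when $n=1$ since $Y^{(0)}=\emptyset$.

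Second, I would record the inclusion $Y^{(n-1)}\subseteq Y^{(n)}$ for $1\le n\le N$. Because every edge of the boundary path $[-1,a_0^{(m)}]\cup\gamma^{(m)}(I^{(m)})\cup[a_{4m+3}^{(m)},\lambda]$ has strictly positive horizontal component --- the cap uses only the increments $\eta$ and $\eta\exp(\pm i\phi)$ with $\eta>0$, $0<\phi<\pi/2$, while the two side segments run along the rays through $-1$ in direction $\exp(i(\pi/2-\phi))$ and through $\lambda$ in direction $\exp(i(\pi/2+\phi))$ (here one uses \eqref{a2} and the identities $a_0^{(m)}+1=m\eta\frac{1+\cos\phi}{\sin\phi}\exp(i(\pi/2-\phi))$ and $a_{4m+3}^{(m)}-\lambda=a_2^{(m)}$) --- this path is the graph of a continuous piecewise-linear profile $\Psi_m:[-1,\lambda]\to[0,\infty)$ with $\Psi_m(-1)=\Psi_m(\lambda)=0$, and
\[
\overline{Y^{(m)}}=\{\,x+iy:\ -1\le x\le\lambda,\ 0\le y\le\Psi_m(x)\,\}.
\]
Thus $Y^{(n-1)}\subseteq Y^{(n)}$ is equivalent to the pointwise inequality $\Psi_{n-1}\le\Psi_n$ on $[-1,\lambda]$. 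On the two outer intervals $[-1,\,(n-1)\eta(1+\cos\phi)-1]$ and $[\lambda-(n-1)\eta(1+\cos\phi),\,\lambda]$ both profiles coincide (they equal $(x+1)\cot\phi$, respectively $(\lambda-x)\cot\phi$, there), so the content is the comparison on the complementary middle interval, where $\Psi_{n-1}$ and $\Psi_n$ are staircases assembled from the same increments --- $\Psi_n$ being obtained from $\Psi_{n-1}$ by inserting one further rise-and-tread on each flank and raising the apex by $\eta\frac{1+\cos\phi}{\sin\phi}$. This last comparison follows from a finite case analysis over the common refinement of the breakpoints of the two profiles; I expect it to be the main obstacle, being the only step that is not soft, although it is visibly true (see Figure~\ref{morecurves}).

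With these two ingredients the conclusion is immediate. Writing $g=F|_{Y^{(n)}}$, which off a null set is injective and satisfies $g(Y^{(n)})=Y^{(n)}$ and $g(Y^{(n-1)})=Y^{(n-1)}$ up to measure zero, the elementary set identity $g(A\setminus B)=g(A)\setminus g(B)$, valid for injective $g$ and $B\subseteq A$, gives
\[
F\bigl(Y^{(n)}\setminus Y^{(n-1)}\bigr)=g(Y^{(n)})\setminus g(Y^{(n-1)})=Y^{(n)}\setminus Y^{(n-1)}
\]
up to a set of measure zero, i.e. $Y^{(n)}\setminus Y^{(n-1)}$ is $F$-invariant. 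All the exceptional null sets involved (the discontinuity lines of $F$ inside $Y^{(n)}$ and $Y^{(n-1)}$) are finite unions of line segments, so discarding them is harmless.
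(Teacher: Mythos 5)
Your proposal is correct and follows essentially the same route as the paper: both pyramids are invariant by Theorem~\ref{invariantcurves}, $F$ restricted to $Y^{(n)}$ is (almost everywhere) an invertible, area-preserving bijection of $Y^{(n)}$ onto itself, and hence the difference set is invariant up to measure zero --- the paper phrases this with preimages and a Lebesgue-measure computation where you use forward images and a set identity for injective maps, but the two are interchangeable. The one step you single out as the ``main obstacle,'' the nesting $Y^{(n-1)} \subseteq Y^{(n)}$, is not addressed in the paper at all (its proof tacitly assumes it when asserting $Y^{(n-1)} = F|_{Y^{(n)}}^{-1}(Y^{(n-1)})$), so your profile-function sketch, though left as a ``finite case analysis,'' is if anything more careful than the published argument; note also that the identity $g(A\setminus B)=g(A)\setminus g(B)$ requires only injectivity of $g$ and not $B\subseteq A$, although you do still need the nesting so that $g=F|_{Y^{(n)}}$ is defined on all of $Y^{(n-1)}$.
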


\begin{proof}
If $n = 1$, then $Y^{(1)} \setminus Y^{(0)} = Y^{(1)}$, which is already $F$-invariant by Theorem \ref{invariantcurves}.  So suppose $N \geq 2$ and $n \geq 2$.  Let $\operatorname{Leb}_2$ denote two-dimensional Lebesgue measure.
Since $l^{(n-1)} > l^{(n)} > 0$, by Theorem \ref{invariantcurves}, $Y^{(n)}$ and $Y^{(n-1)}$ are $F$-invariant.  Thus, $F |_{Y^{(n)}}$ and $F |_{Y^{(n-1)}}$ are invertible, and we have that 
\begin{equation*}
Y^{(n-1)} = F |_{Y^{(n)}}^{-1}(Y^{(n-1)}).
\end{equation*}
up to a set of zero $\operatorname{Leb}_2$-measure.  Thus, we know that
\begin{equation*}
\operatorname{Leb}_2\left( \left( Y^{(n)} \setminus Y^{(n-1)} \right) \cap F |_{Y^{(n)}}^{-1}(Y^{(n-1)}) \right) = 0.
\end{equation*}
By the invariance of $Y^{(n)}$, this implies
\begin{equation*}
\operatorname{Leb}_2 \left( (Y^{(n)} \setminus Y^{(n-1)}) \cap F |_{Y^{(n)}}^{-1}(Y^{(n)} \setminus Y^{(n-1)}) \right) = \operatorname{Leb}_2\left( Y^{(n)} \setminus Y^{(n-1)} \right),
\end{equation*}
but $F |_{Y^{(n)}}$ is area-preserving since it is invertible, so in fact
\begin{equation*}
Y^{(n)} \setminus Y^{(n-1)} = F |_{Y^{(n)}}^{-1}(Y^{(n)} \setminus Y^{(n-1)}),
\end{equation*}
up to a set with Lebesgue measure zero.
\end{proof}

\subsection{Dynamics on the Invariant Curves} %%%%%%%%%%%%%%%%%%%%%%%%%%%%%

For each integer $0 \leq n \leq N(\lambda, \phi)$, recall that $I^{(n)} = \left[ 0, 2l^{(n)} + (4n+1)\eta \right)$, and from Theorem \ref{invariantcurves} recall that there is a 4-IET $(\mathcal{I}^{(n)},f^{(n)})$ defined by
\begin{equation}
\label{IETinterval}
\begin{aligned}
\mathcal{I}^{(n)} 	&= \left. 	\begin{cases} 
									I_0^{(n)} &\supset (0, l^{(n)} + \eta), \\
									I_1^{(n)} &\supset (l^{(n)} + \eta, l^{(n)} + (2n+1)\eta), \\
									I_2^{(n)} &\supset (l^{(n)} + (2n+1)\eta, l^{(n)} + (4n+1)\eta), \\
									I_3^{(n)} &\supset (l^{(n)} + (4n+1)\eta, 2l^{(n)} + (4n+1)\eta)
								\end{cases} \right\}, \\
f^{(n)}(x) 	        &=	 \begin{cases}
								x + l^{(n)} + 4n\eta 		&\text{ if } x \in I_0^{(n)}, \\
								x + (2n-1)\eta 			&\text{ if } x \in I_1^{(n)}, \\
								x - (2n+1)\eta 			&\text{ if } x \in I_2^{(n)}, \\
								x - l^{(n)} - (4n+1)\eta	&\text{ if } x \in I_3^{(n)}. \\
								\end{cases}
\end{aligned}
\end{equation}
given by the combinatorial data $(\sigma,\xi^{(n)})$ from \eqref{IET}.
\begin{theorem}
\label{densitythm}
Either the $F$-orbits of all points on $\gamma^{(n)}(I^{(n)})$ are dense in $\gamma^{(n)}(I^{(n)})$ or they are periodic.  Furthermore, $F$-orbits along $\gamma^{(n)}(I^{(n)})$ are dense if and only if
\begin{equation*}
\lambda \neq \frac{p + 2n(1 + \cos\phi)q}{p + (1 + 2n(1 + \cos\phi))q},
\end{equation*}
for any $p,q \in \Z$ with $q > 0$.
\end{theorem}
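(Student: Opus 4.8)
The plan is to push everything down onto the underlying $4$-IET $f^{(n)}$, show that after inducing on a single subinterval it becomes a genuine circle rotation, and then read off both the dichotomy and the arithmetic condition from the rotation. First, since $\gamma^{(n)}$ is a homeomorphism onto $\gamma^{(n)}(I^{(n)})$ that conjugates $F|_{\gamma^{(n)}(I^{(n)})}$ to $f^{(n)}$ up to a countable set (Theorem \ref{invariantcurves}), the $F$-orbit of $\gamma^{(n)}(x)$ is dense in $\gamma^{(n)}(I^{(n)})$, respectively periodic, exactly when the $f^{(n)}$-orbit of $x$ is dense in $I^{(n)}$, respectively periodic. So it suffices to prove that either every $f^{(n)}$-orbit is dense in $I^{(n)}$ or every $f^{(n)}$-orbit is periodic, with the former holding precisely when $l^{(n)}/\eta\notin\Q$. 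The condition stated in the theorem is then just the elementary rewriting of $l^{(n)}/\eta=\lambda/\eta-2n(1+\cos\phi)=p/q$ (with $q>0$) in the form $\lambda=\frac{p+2n(1+\cos\phi)q}{p+(1+2n(1+\cos\phi))q}$, using $\eta=1-\lambda$.

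The heart of the argument is to compute the first-return map $g$ of $f^{(n)}$ to $I_0^{(n)}=(0,l+\eta)$, where I abbreviate $l=l^{(n)}$ and use \eqref{IETinterval}. On $(\eta,l+\eta)$ one has $f^{(n)}\big((\eta,l+\eta)\big)=I_3^{(n)}$ and $f^{(n)}(I_3^{(n)})=(0,l)\subset I_0^{(n)}$, so there the return time is $2$ and $g(x)=x-\eta$. On $(0,\eta)$ one has $f^{(n)}\big((0,\eta)\big)=B_{4n}$, where for $c\in\{1,\dots,4n\}$ I write $B_c=(l+c\eta,l+(c+1)\eta)$; such a block lies in $I_1^{(n)}$ when $1\le c\le 2n$ and in $I_2^{(n)}$ when $2n+1\le c\le 4n$, so $f^{(n)}$ sends $B_c$ to $B_{c+(2n-1)}$ in the first case and to $B_{c-(2n+1)}$ in the second — i.e.\ the index evolves by $c\mapsto c+(2n-1)\pmod{4n}$ — except that $f^{(n)}(B_{2n+1})=(l,l+\eta)\subset I_0^{(n)}$. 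Since $\gcd(2n-1,4n)=1$, the index orbit starting at $c=4n$ hits $2n+1$ after finitely many steps, so the excursion through $I_1^{(n)}\cup I_2^{(n)}$ is finite; and its net translation equals $l$, because the opening step contributes $l+4n\eta$ while each subsequent step contributes $\eta$ times the corresponding change of $c$, for a total of $\eta(0-4n)=-4n\eta$. Hence $g(x)=x+l$ on $(0,\eta)$, with image $(l,l+\eta)$. Therefore $g$ is exactly the exchange of $(0,\eta)$ and $(\eta,l+\eta)$, that is, the rotation $x\mapsto x-\eta\pmod{l+\eta}$ on $\R/(l+\eta)\Z$. (If $l=0$ then $I_3^{(n)}=\emptyset$ and $g$ is the identity; this is consistent with the conclusion below.) The same block bookkeeping shows that every point of $I_1^{(n)}\cup I_2^{(n)}\cup I_3^{(n)}$ reaches $I_0^{(n)}$ in finitely many iterations, so $\bigcup_{k\ge0}(f^{(n)})^{-k}(I_0^{(n)})=I^{(n)}$ and $f^{(n)}$ is a tower over $g$ with bounded return time, whose levels are $(0,\eta)$, $(\eta,l+\eta)$, $I_3^{(n)}$ and the finitely many excursion blocks $B_c$.

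It remains to push the rotation dichotomy up the tower. The rotation number of $g$ is $\eta/(l+\eta)$, which is rational if and only if $(l+\eta)/\eta=1+l/\eta$ is, i.e.\ iff $l/\eta\in\Q$; and a circle rotation is either minimal (irrational rotation number) or has all orbits periodic (rational), with nothing in between. If $g$ is minimal then so is $f^{(n)}$: for $x\in I_0^{(n)}$ the $g$-orbit, hence the $f^{(n)}$-orbit, is dense in $I_0^{(n)}$, and applying the (piecewise-translation) maps $f^{(n)}{}^{j}$ that carry $(0,\eta)$ or $(\eta,l+\eta)$ to the various tower levels shows density in all of $I^{(n)}$; an arbitrary point enters $I_0^{(n)}$, so its orbit is a shift of such a dense orbit. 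If instead all $g$-orbits are periodic then all $f^{(n)}$-orbits are: a point of $I_0^{(n)}$ is periodic with period the sum of the return times over one $g$-period, and since $f^{(n)}$ is invertible (a.e.) every point — being eventually in $I_0^{(n)}$ — is periodic too. This establishes the dichotomy, with density of all orbits equivalent to $l^{(n)}/\eta\notin\Q$; by the first paragraph this is equivalent to $\lambda\neq\frac{p+2n(1+\cos\phi)q}{p+(1+2n(1+\cos\phi))q}$ for all $p\in\Z$, $q\in\Z_{>0}$, and transferring back through $\gamma^{(n)}$ gives the theorem.

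The only step that is not essentially formal is the second one: controlling the excursions of $f^{(n)}$ through $I_1^{(n)}\cup I_2^{(n)}$ and showing they are finite with net translation exactly $l^{(n)}$. It is made tractable by two observations. First, the subinterval $(0,\eta)\subset I_0^{(n)}$ moves under $f^{(n)}$ as a single $\eta$-block whose itinerary depends only on an integer index $c$, so the return time is constant on it and $g$ has only two branches. Second, the sum of the $\pm\eta$-translations accumulated along an excursion is pinned down by the net change of $c$, which forces the net displacement to be $l^{(n)}$ regardless of the precise path. With these in hand the rest is arithmetic together with the elementary fact $\gcd(2n-1,4n)=1$, which is what makes the index orbit exhaust $\Z/4n\Z$ and hence guarantees that the excursion terminates.
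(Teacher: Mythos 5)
Your proposal is correct and follows essentially the same route as the paper: conjugate $F|_{\gamma^{(n)}(I^{(n)})}$ to the $4$-IET $f^{(n)}$ via $\gamma^{(n)}$, induce on $I_0^{(n)}$ to obtain a $2$-IET (a circle rotation with rotation data governed by $l^{(n)}/\eta$), invoke the rotation dichotomy, and translate $l^{(n)}/\eta\in\Q$ into the stated condition on $\lambda$. The main difference is that you actually carry out the block-itinerary computation of the first-return map and the tower argument that the paper dismisses with ``it is not so difficult to show''; your explicit return map is the inverse rotation of the one given by the paper's data $(\zeta,\nu^{(n)})$, which is immaterial for minimality versus periodicity and for the arithmetic criterion.
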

\begin{proof}
By Theorem \ref{invariantcurves}, we have that for all $x \in I^{(n)}$,
\begin{equation*}
F \circ \gamma^{(n)}(x) = \gamma^{(n)} \circ f^{(n)}(x).
\end{equation*}
The first return map of $f^{(n)}$ to the first subinterval $I_0^{(n)}$ is a 2-IET $f_0^{(n)}$ with data
\begin{equation*}
\begin{aligned}
\zeta 	&= 	(0~1), \\
\nu^{(n)} 	&= 	\left( l^{(n)}, \eta \right).
\end{aligned}
\end{equation*}
It is not so difficult to show that under $f_0^{(n)}$, the subinterval $I_0^{(n)}$ visits all of $I^{(n)}$ in $4n$ iterations, and that the dynamics of $f^{(n)}$ is determined by the first return map $f_0^{(n)}$.  It is known that 2-IETs are conjugate to circle rotations. Hence every orbit is either periodic or every orbit is dense.  Moreover, orbits under $f^{(n)}$ are dense in $I^{(n)}$ if and only if they are dense in $I_0^{(n)}$ under $f_0^{(n)}$.  

Recall that $\eta > 0$, so orbits are periodic on the curve if and only if
\begin{equation*}
\frac{l^{(n)}}{\eta} \in \Q.
\end{equation*}
Because $l^{(n)} = \lambda - 2n\eta(1 + \cos\phi)$, this is equivalent to
\begin{equation*}
\frac{\lambda}{\eta} \in 2n \cos\phi + \Q.
\end{equation*}
Further recalling that $\eta = 1 - \lambda$, we see that this is equivalent to the statement that there exist $p,q \in \Z$, $q > 0$ such that
\begin{equation*}
\frac{1 - \lambda}{\lambda} = \frac{q}{2nq\cos\phi + p}.
\end{equation*}
Adding 1 to both sides reciprocating gives us
\begin{equation*}
\lambda = \frac{p + 2n(1 + \cos\phi)q}{p + (1 + 2n(1 + \cos\phi))q}.
\end{equation*}
Conversely, $f^{(n)}$-orbits are dense on $I^{(n)}$ if and only if
\begin{equation*}
\lambda \neq \frac{p + 2n(1 + \cos\phi)q}{p + (1 + 2n(1 + \cos\phi))q},
\end{equation*}
for every $p,q \in \Z$ such that $q > 0$.
\end{proof}

Observe that the IET given by data $(\sigma,\xi^{(n)})$ as in \eqref{IET} does not satisfy the Keane condition.  In particular, recall from \eqref{IETinterval} that 
\begin{equation*}
\partial I_1^{(n)} = l^{(n)} + \eta \text{ and } \partial I_2^{(n)} = l^{(n)} + (2n+1)\eta.
\end{equation*}
where $\partial [a,b) = a$.  Then, by using the formula for $f^{(n)}$ in \eqref{IETinterval}, a simple inductive argument shows that for $0 \leq k \leq n-1$, we have
\begin{align*}
(f^{(n)})^{2k+1}(l^{(n)}+\eta)    &= (f^{(n)})^{2k}(l^{(n)}+2n\eta) \\
                                                    &= l^{(n)} + (2n - 2k)\eta.
\end{align*}
Specifically, when $k = n-1$ we get
\begin{equation*}
(f^{(n)})^{2n-1}(l^{(n)}+\eta) = l^{(n)} + 2\eta,
\end{equation*}
and finally,
\begin{equation*}
(f^{(n)})^{2n}(l^{(n)}+\eta) = l^{(n)} +2\eta + (2n-1)\eta = l^{(n)} + (2n+1)\eta = \partial I_2^{(n)}.
\end{equation*}

\section{Discussion}   %%%%%%%%%%%%%%%%%%%%%%%%%%%%%%%%%%%

The existence of a new concrete example of a family of TCEs that have invariant curves that are embeddings of IETs makes Theorem \ref{invariantcurves} the most important result of this paper, particularly as through Corollary \ref{Invariant Layers} we conclude that these curves have the effect of restricting the distribution of orbits in the phase space.  In particular,  the invariance of $Y^{(n)}$ as in Theorem \ref{invariantcurves} and the invariance of the layers $Y^{(k)} \setminus Y^{(k-1)}$ as per Corollary \ref{Invariant Layers} suggest that if the exceptional set intersects the interior of more than one layer, it cannot support an ergodic invariant measure. However, determining the set of parameters such that the exceptional set intersects in this way remains an open problem. Although the curves we study here are trivial embeddings in the sense of \cite{AGPR}, they are nonetheless interesting because they divide the phase space. They also potentially give information about nontrivial embeddings that may accumulate on them.

For such polygonal embeddings of IETs into PWIs, there is a connection between the renormalization structure of the IET and the geometry of its embedding, namely that the piecewise linear nature of some IET-embeddings could arise from a failure for the underlying IET to meet the Keane condition, and that the number of pieces in a piecewise linear embedding roughly corresponds to the number of iterates it takes for two interval boundaries to coincide.  This leads us to conjecture that when an IET-embedding does not consist of line segments or circle arcs, the underlying IET must satisfy the Keane condition.

Recall the PWI defined by $(\mathcal{Q}^{(n)},F |_{Y^{(n)}})$, where $\mathcal{Q}^{(n)}$ is as in \eqref{Qpartition}.  Despite the appearance of non-convex atoms, we conjecture that the non-convex atom breaks down into convex cells after a finite number of dynamical refinements.  This appears to be the case because the edges that ``produce'' this non-convexity are and remain on the boundary of $Y$ for all iterates.  See figures \ref{restriction_to_Y} %and \ref{Y_twosteps} 
for example.  

\begin{figure}
\centering
\includegraphics[width=\linewidth]{{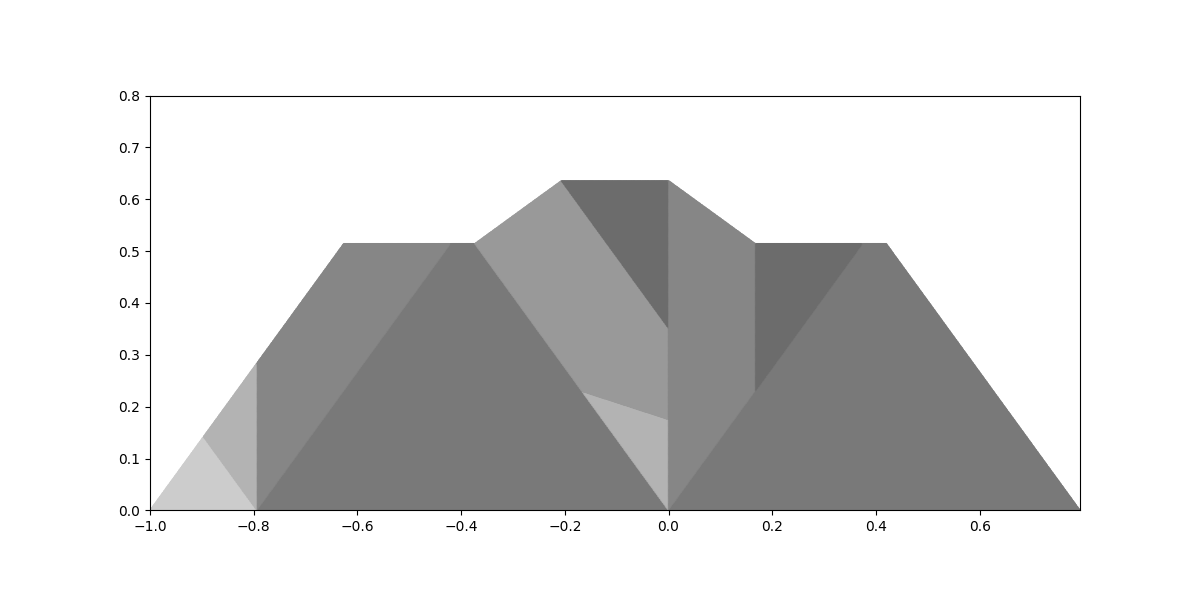}}
\caption{An illustration of the first dynamical refinement of the partition $\mathcal{Q}^{(1)}$ under the map $F |_{Y^{(1)}}$, where $\phi = \pi/5$, $\rho = 1$ and $\lambda = \frac{2+2\cos\phi}{3+2\cos\phi} + 0.01$.  Observe that the non-convex atom appears subdivided into convex pieces, meaning that the non-convexity is not pathological in this case, in that the encoding associated to each $n$-cell is unique.}
\label{restriction_to_Y}
\end{figure}

% \begin{figure}
% \centering
% \includegraphics[width=\linewidth]{Y_twosteps.png}
% \caption{An illustration of the third dynamical refinement of the partition $\mathcal{Q}^{(2)}$ under the PWI $F |_{Y^{(2)}}$ where $\phi = \pi/4$, $\rho = 1$ and $\lambda = \frac{4 + 4\cos\phi}{5+4\cos\phi} + 0.01$.  Note that much like figure \ref{restriction_to_Y}, the non-convex atoms in $\mathcal{Q}^{(2)}$ decompose into convex pieces under the refinement, although we need to reach a finer refinement to achieve this.}
% \label{Y_twosteps}
% \end{figure}

\begin{figure}
\centering
\includegraphics[width=\linewidth]{{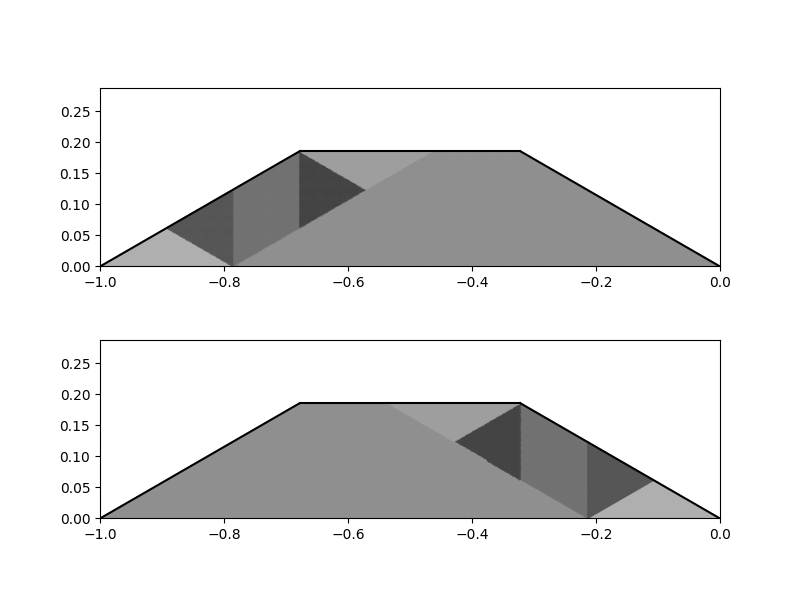}}
\caption{An illustration of the first return map of a TCE of the form \eqref{parameters}, with parameters $\phi = \pi/4$ and $\lambda = \pi/4$, to the trapezium $Q_3^{(1)} = Y^{(1)} \cap P_3$.  Observe that this appears to be an invertible PWI on a compact domain for which there are two distinct IETs on the top and bottom edges of the boundary. This is a simple example, but it is not true in general that this map will have a finite partition.}
\label{trapezium1}
\end{figure}

Consider the first return map $F$ of $(\mathcal{Q}^{(n)}, F |_{Y^{(n)}})$ to the trapezium atom $Q_3^{(n)} = P_3 \cap Y^{(n)}$.  This mapping is a PWI, though the partition seems to depend on the angle, and it is not guaranteed that the partition is finite.  In all cases, however, the mapping has the property that its action on the top and bottom edges of $Q_3^{(n)}$ are two distinct 2-IETs. Thus the mapping on the trapezium $Q_3^{(n)}$ could be viewed as a transition map between these two IETs, though this map is unstudied.  See figure \ref{trapezium1} for an example of such a mapping on the trapezium.  In a similar way, as discussed in Section 3, PWIs with $d=1$, $\alpha \in \mathbb{B}^3$, $0 < \lambda < 1$ and $\sigma$ the identity permutation also induce mappings on a trapezium between the IET on the baseline and its first Rauzy induction.  A natural question is whether these PWIs are uniquely defined by the 2-IETs on their top and bottom edges.  Broadly, let $f_1$ and $f_2$ be IETs on the intervals $I_1 = [0,l_1]$ and $I_2 = [a,a+l_2]$, respectively, for $a \in \R$, $l_1,l_2 > 0$. Set $h > 0$, and let $X$ denote the closed trapezium defined by the vertices $(0,0)$, $(l_1,0)$, $(a,h)$, and $(a+l_2,h)$.  Then we ask how many PWIs $(\calp,F)$ exist on $X$ which satisfy the boundary conditions
\begin{equation*}
F(x,0) = (f_1(x),0) \text{ for all } 0 \leq x < l_1, \\
\end{equation*}
and
\begin{equation*}
F(x,h) = (f_2(x),h) \text{ for all } a \leq x < a+l_2?
\end{equation*}

\subsection*{Funding Acknowledgement}
NC was supported by a studentship from the UK Engineering and Physical Sciences Research Council (EPSRC).

\subsection*{Data Access Statement}
No new data were generated or analysed during this study.  The figures in this study were produced by Python programming code written by NC.  This code is publicly available at:
{\tt https://github.com/NoahCockram/pyTCE/tree/main}. For the purpose of open access, the authors have applied a Creative Commons Attribution (CC BY) licence to any Author Accepted Manuscript version arising from this submission.

\end{document}